\newtheorem{theorem}{Theorem}
\theoremstyle{plain}
\newtheorem{corollary}{Corollary}
\newtheorem{remark}{Remark}
\numberwithin{equation}{section}
\begin{document}

\title[Sharp weighted companion inequalities of Ostrowski type] {NEW SHARP INEQUALITIES OF OSTROWSKI AND
GENERALIZED TRAPEZOID TYPE FOR THE RIEMANN–STIELTJES INTEGRALS AND
APPLICATIONS
}

\author[M.W. Alomari]{Mohammad W. Alomari}

\address{Department of Mathematics,
Faculty of Science, Jerash University, 26150 Jerash, Jordan}
\email{mwomath@gmail.com}
\date{\today}
\subjclass[2000]{26D10, 26D15}

\keywords{Ostrowski's inequality, bounded variation,
Riemann-Stieltjes integral.
\\
This paper was published in Ukrainian Mathematical
Journal,  65 (7) 2013, 895--916. However, I received an appreciated e-mail from Dr. E. Kikianty, where she found several major erratum and so recommended to publish this version of the paper.}

\begin{abstract}
In this paper, new sharp weighted generalizations of Ostrowski and
generalized trapezoid type inequalities for the Riemann--Stieltjes
integrals are proved. Several related inequalities are deduced and
investigated. New Simpson's type inequalities for
$\mathcal{RS}$--integral are pointed out. Finally, as application;
an error estimation of a general quadrature rule for
$\mathcal{RS}$--integral via Ostrowski--generalized trapezoid
quadrature formula is given.
\end{abstract}


\maketitle

\section{Introduction}
In order to approximate the Riemann--Stieltjes integral $\int_a^b
{f\left( t \right)du\left( t \right)}$, Dragomir \cite{Dragomir2}
has introduced the following (general) quadrature rule:
\begin{align*}
\mathcal{D}\left( f,u;x \right) := f\left( x \right) \left[
{u\left( {b} \right) - u\left( a \right)} \right]  - \int_a^b
{f\left( t \right)du\left( t \right)}
\end{align*}
After that, many authors have studied this quadrature rule under
various assumptions of integrands and integrators. In the
following, we give a summary of these
results: let $f,u:[a,b] \to \mathbb{R}$ be as follow:\\

(1)\,\, $f$ is of $r$-$H_f$--H\"{o}lder type on $[a,b]$, where
$H_f>0$ and $r \in (0,1]$ are given.

(1$^{\prime}$)\,\, $u$ is of $s$-$H_u$--H\"{o}lder type on
$[a,b]$, where $H_u>0$ and $s \in (0,1]$ are given.

(2)\,\, $f$ is of bounded variation on $[a,b]$.

(2$^{\prime}$)\,\, $u$ is of bounded variation on $[a,b]$.

(3)\,\, $f$ is $L_f$--Lipschitz on $[a,b]$.

(3$^{\prime}$)\,\, $u$ is $L_u$--Lipschitz on $[a,b]$.

(4)\,\, $f$ is monotonic nondecreasing on $[a,b]$.

(4$^{\prime}$)\,\, $u$ is monotonic nondecreasing on $[a,b]$.

(5)\,\, $f$ is $L_{1,f}$--Lipschitz on $[a,x]$ and
$L_{2,f}$--Lipschitz on $[x,b]$.

(5$^{\prime}$)\,\, $u$ is $L_{1,u}$--Lipschitz on $[a,x]$ and
$L_{2,u}$--Lipschitz on $[x,b]$.

(6)\,\, $f$ is monotonic nondecreasing on $[a,x]$ and $[x,b]$.

(6$^{\prime}$)\,\,\, $u$ is monotonic nondecreasing on $[a,x]$ and
$[x,b]$.

(7)\,\, $f$ is absolutely continuous on $[a,b]$.

(8)\,\, $|f'|$ is convex on $[a,b]$.

Then, the following inequalities hold under the corresponding
assumptions:
\begin{multline}
\label{eq.Dra1} \left| {\mathcal{D}\left( f,u;x \right)} \right|
\\
\le \left\{ \begin{array}{l}
 H_f\left[ {\frac{{b - a}}{2} + \left| {x - \frac{{a + b}}{2}}
\right|} \right]^r \cdot \bigvee_a^b\left( {u} \right),\,\,\,\,\,\,\,\,\,\,\,\,\,\,\,\,\,\,\,\,\,\,\,\,\,\,\,\,\,\,\,\,\,\,\,\,\,\,\,\,\,\,\,\,\,\,\,\,\,\,\,\,\,\,\,\,\,\,\,\,\,\,\,\,\,\,\,\,\,\,\,\,\,\,\,\,\,\,\,\,\,\,\,\,\,\,\,\,\, (1),(2^{\prime}), \,\,\left( {[13]} \right) \\
  \\
 H_u \left\{ \begin{array}{l} \left[ {\left( {x - a} \right)^s +
\left( {b - x} \right)^s } \right]\left[ {\frac{1}{2}\bigvee_a^b
\left( f \right) + \frac{1}{2}\left| {\bigvee_a^x \left( f \right)
- \bigvee_x^b \left( f
\right)} \right|} \right] \\
\\
\left[ { \left( {x - a} \right)^{qs}  + \left( {b - x}
\right)^{qs} } \right]^{1/q} \left[ {\left( {\bigvee_{a}^x\left( f
\right)} \right)^p  + \left(
{\bigvee_{x}^b\left( f \right)} \right)^p } \right]^{1/p},\,\,\,\,\,\,\,\,\,\,\,\,\, (1^{\prime}),(2),\,\,\left( {[14]} \right)\\
\,\,\,\,\,\,\,\,\,\,\,\,\,\,\,\,\,\,\,\,\,\,\,\,\,\,\,\,\,\,\,\,\,\,\,\,\,\,\,\,\,\,\,\,\,\,\,\,\,\,\,\,\,\,\,\,\,\,\,\,\,\,\,\,\,\,\,\,\,\,\,\,\,\,\,\,\,\,\,\,\,\,\,\,\,\,\,\,\,\,\,\,p>1\,\frac{1}{p}+\frac{1}{q}=1\\
 \left[ {\frac{{b - a}}{2} + \left| {x - \frac{{a + b}}{2}}
\right|} \right]^s  \cdot \bigvee_a^b \left( {f} \right) \\
 \end{array} \right. \\
\\
\frac{{L_uH_f}}{{r + 1}}\left[ {\left( {x - a} \right)^{r + 1}  +
\left( {b - x} \right)^{r + 1} } \right],\,\,\,\,\,\,\,\,\,\,\,\,\,\,\,\,\,\,\,\,\,\,\,\,\,\,\,\,\,\,\,\,\,\,\,\,\,\,\,\,\,\,\,\,\,\,\,\,\,\,\,\,\,\,\,\,\,\,\,\,\,\,\,\,\,\,\,\,\,\,\,\,\,\,\,\,\,\,\,\,\,\,\,\,\,\,\,\,(1),(3^{\prime}), \,\,\left( {[6]} \right)\\
\\
\frac{{L_fH_u}}{{s + 1}}\left[ {\left( {x - a} \right)^{s + 1}  +
\left( {b - x} \right)^{s + 1} } \right],\,\,\,\,\,\,\,\,\,\,\,\,\,\,\,\,\,\,\,\,\,\,\,\,\,\,\,\,\,\,\,\,\,\,\,\,\,\,\,\,\,\,\,\,\,\,\,\,\,\,\,\,\,\,\,\,\,\,\,\,\,\,\,\,\,\,\,\,\,\,\,\,\,\,\,\,\,\,\,\,\,\,\,\,\,\,\,\, (1^{\prime}),(3), \,\,\left( {[6]} \right)\\
\\
L_uL_f\left[ {\frac{1}{4} + \left( {\frac{{x - {\textstyle{{a + b}
\over 2}}}}{{b - a}}} \right)^2 } \right]\left( {b - a}
\right)^2,\,\,\,\,\,\,\,\,\,\,\,\,\,\,\,\,\,\,\,\,\,\,\,\,\,\,\,\,\,\,\,\,\,\,\,\,\,\,\,\,\,\,\,\,\,\,\,\,\,\,\,\,\,\,\,\,\,\,\,\,\,\,\,\,\,\,\,\,\,\,\,\,\,\,\,\,\,\,\,\,\,\,\,\,\,\,(3),(3^{\prime}), \,\,\left( {[6]} \right)\\
\\
\max \left\{ {L_{1,u} ,L_{2,u} } \right\} \times \left\{
\begin{array}{l}
 \left[ {\frac{{b - a}}{2} + \left| {x - \frac{{a + b}}{2}} \right|} \right]\left[ {f\left( b \right) - f\left( a \right)} \right], \\
\,\,\,\,\,\,\,\,\,\,\,\,\,\,\,\,\,\,\,\,\,\,\,\,\,\,\,\,\,\,\,\,\,\,\,\,\,\,\,\,\,\,\,\,\,\,\,\,\,\,\,\,\,\,\,\,\,\,\,\,\,\,\,\,\,\,\,\,\,\,\,\,\,\,\,\,\,\,\,\,\,\,\,\,\,\,\,\,\,\,\,\,\,\,\,\,\,\,\,\,\,\,\,\,\,\,\,\,\,\,\,\,\,\,\,\,(5^{\prime}),(6), \,\,\left( {[6]} \right)\\
 \left[ {\frac{{f\left( b \right) - f\left( a \right)}}{2} + \frac{1}{2}\left| {f\left( x \right) - \frac{{f\left( a \right) + f\left( b \right)}}{2}} \right|} \right]\left( {b - a} \right) \\
 \end{array} \right.\\
 \\
\max \left\{ {L_{1,f} ,L_{2,f} } \right\} \times \left\{
\begin{array}{l}
 \left[ {\frac{{b - a}}{2} + \left| {x - \frac{{a + b}}{2}} \right|} \right]\left[ {u\left( b \right) - u\left( a \right)} \right], \\
 \,\,\,\,\,\,\,\,\,\,\,\,\,\,\,\,\,\,\,\,\,\,\,\,\,\,\,\,\,\,\,\,\,\,\,\,\,\,\,\,\,\,\,\,\,\,\,\,\,\,\,\,\,\,\,\,\,\,\,\,\,\,\,\,\,\,\,\,\,\,\,\,\,\,\,\,\,\,\,\,\,\,\,\,\,\,\,\,\,\,\,\,\,\,\,\,\,\,\,\,\,\,\,\,\,\,\,\,\,\,\,\,\,\,\,\,\,(5),(6^{\prime}), \,\,\left( {[6]} \right)\\
 \left[ {\frac{{u\left( b \right) - u\left( a \right)}}{2} + \frac{1}{2}\left| {u\left( x \right) - \frac{{u\left( a \right) + u\left( b \right)}}{2}} \right|} \right]\left( {b - a} \right) \\
 \end{array} \right.\\
 \\
 H_f\left[ {\frac{{b - a}}{2} + \left| {x - \frac{{a + b}}{2}}
\right|} \right]^r \cdot \left[ {u\left( b \right) - u\left( a
\right)} \right],\,\,\,\,\,\,\,\,\,\,\,\,\,\,\,\,\,\,\,\,\,\,\,\,\,\,\,\,\,\,\,\,\,\,\,\,\,\,\,\,\,\,\,\,\,\,\,\,\,\,\,\,\,\,\,\,\,\,\,\,\,\,\,\,\,\,\,\,\,\,\,\,\,\, (1),(4^{\prime}), \,\,\left( {[11]} \right) \\
\\
 H_u\left[ {\frac{{b - a}}{2} + \left| {x - \frac{{a + b}}{2}}
\right|} \right]^s \cdot \left[ {f\left( b \right) - f\left( a
\right)} \right],\,\,\,\,\,\,\,\,\,\,\,\,\,\,\,\,\,\,\,\,\,\,\,\,\,\,\,\,\,\,\,\,\,\,\,\,\,\,\,\,\,\,\,\,\,\,\,\,\,\,\,\,\,\,\,\,\,\,\,\,\,\,\,\,\,\,\,\,\,\,\,\,\,\, (1^{\prime}), (4), \,\,\left( {[11]} \right) \\
\\
\mathop {\sup }\limits_{t \in \left[ {a,x} \right]} \left\{
{\left( {x - t} \right)\mu \left( {f;x,t} \right)} \right\} \cdot
\bigvee_a^x \left( u \right)  + \mathop {\sup }\limits_{t \in
\left[ {x,b} \right]} \left\{ {\left( {t - x} \right)\mu \left(
{f;x,t} \right)} \right\} \cdot \bigvee_x^b \left( u \right),\,\,\,\,\, (2^{\prime}), (7), \,\,\left( {[7]} \right) \\
\\
\frac{1}{2}\left[ {\left( {x - a} \right) \cdot \bigvee_a^x \left(
u \right) \cdot \left\| {f'} \right\|_{\infty ,\left[ {a,x}
\right]} + \left( {b - x} \right) \cdot \bigvee_x^b \left( u
\right) \cdot \left\| {f'} \right\|_{\infty ,\left[ {x,b} \right]}
} \right]
\\
+ \frac{1}{2}\left| {f'\left( x \right)} \right| \cdot \left[
{\left( {x - a} \right) \cdot \bigvee_a^x \left( u \right) +
\left( {b -
x} \right) \cdot \bigvee_x^b \left( u \right)} \right],\,\,\,\,\,\,\,\,\,\,\,\,\,\,\,\,\,\,\,\,\,\,\,\,\,\,\,\,\,\,\,\,\,\,\,\,\,\,\,\,\,\,\,\,\,\,\ (2^{\prime}), (7),(8), \,\,\left( {[7]} \right) \\\\
 \end{array} \right.
\end{multline}
More details about each inequality of the above, the reader may
refer to the corresponding mentioned references and the references
therein.

From a different view point, the authors of \cite{Dragomir5}
considered the problem of approximating the Stieltjes integral
$\int_a^b {f\left( t \right)du\left( t \right)}$ via the
generalized trapezoid rule $\left[ {u\left( x \right) - u\left( a
\right)} \right]f\left( a \right) + \left[ {u\left( b \right) -
u\left( x \right)} \right]f\left( b \right)$
\begin{align*}
\mathcal{T}\left( f,u;x \right) := \left[ {u\left( x \right) -
u\left( a \right)} \right]f\left( a \right) + \left[ {u\left( b
\right) - u\left( x \right)} \right]f\left( b \right) - \int_a^b
{f\left( t \right)du\left( t \right)}
\end{align*}

\begin{multline}
\label{eq.Dra1} \left| {\mathcal{T}\left( f,u;x \right)} \right|
\\
\le \left\{ \begin{array}{l}
 H_u\left[ {\frac{{b - a}}{2} + \left| {x - \frac{{a + b}}{2}}
\right|} \right]^r \cdot \bigvee_a^b\left( {f} \right),\,\,\,\,\,\,\,\,\,\,\,\,\,\,\,\,\,\,\,\,\,\,\,\,\,\,\,\,\,\,\,\,\,\,\,\,\,\,\,\,\,\,\,\,\,\,\,\,\,\,\,\,\,\,\,\,\,\,\,\,\,\,\,\,\,\,\,\,\,\,\,\,\,\,\,\,\,\,\,\,\,\,\,\,\,\,\,\,\,\,\, (1^{\prime}),(2), \,\,\left( {[15]} \right) \\
  \\
 H_f \left\{ \begin{array}{l} \left[ {\left( {x - a} \right)^s +
\left( {b - x} \right)^s } \right]\left[ {\frac{1}{2}\bigvee_a^b
\left( u \right) + \frac{1}{2}\left| {\bigvee_a^x \left( u \right)
- \bigvee_x^b \left( u
\right)} \right|} \right] \\
\\
\left[ { \left( {x - a} \right)^{qs}  + \left( {b - x}
\right)^{qs} } \right]^{1/q} \left[ {\left( {\bigvee_{a}^x\left( u
\right)} \right)^p  + \left(
{\bigvee_{x}^b\left( u \right)} \right)^p } \right]^{1/p},\,\,\,\,\,\,\,\,\,\,\,\,\,\,\, (1),(2^{\prime}),\,\, \,\,\left( {[8]} \right)\\
\,\,\,\,\,\,\,\,\,\,\,\,\,\,\,\,\,\,\,\,\,\,\,\,\,\,\,\,\,\,\,\,\,\,\,\,\,\,\,\,\,\,\,\,\,\,\,\,\,\,\,\,\,\,\,\,\,\,\,\,\,\,\,\,\,\,\,\,\,\,\,\,\,\,\,\,\,\,\,\,\,\,\,\,\,\,\,\,\,\,\,\,p>1\,\frac{1}{p}+\frac{1}{q}=1\\
 \left[ {\frac{{b - a}}{2} + \left| {x - \frac{{a + b}}{2}}
\right|} \right]^s  \cdot \bigvee_a^b \left( {u} \right) \\
 \end{array} \right. \\
 \end{array} \right.
\end{multline}
For new quadrature rules involving $\mathcal{RS}$--integral see
the recent works \cite{alomari1}--\cite{alomari3}. For other
results concerning various approximation for
$\mathcal{RS}$--integral under various assumptions on $f$ and $u$,
see \cite{Anastassiou,Barnett,CeroneDragomir1,Cerone},
\cite{Dragomir6}--\cite{Dragomir8} and the references therein.\\

In the recent work \cite{Liu}, Z. Liu has proved sharp
generalization of weighted Ostrowski type inequality for mappings
of bounded variation, as follows (see also \cite{WLiu}):
\begin{theorem}\label{thm.liu}
Let $f:[a,b] \to \mathbb{R}$  be a mapping of bounded variation,
$g: [a,b] \to [0, \infty)$ continuous and positive on $(a,b)$.
Then for any $x \in [a,b]$ and $\alpha \in [0,1]$, we have
\begin{multline}
\label{liu.ineq} \left| {\int_a^b {f\left( t \right)g\left( t
\right)dt}  - \left[ {\left( {1 - \alpha } \right)f\left( x
\right)\int_a^b {g\left( t \right)dt}}\right.}\right.
\\
\left. {+\left. {\alpha \left( {f\left( a \right)\int_a^x {g\left(
t \right)dt}  + f\left( b \right)\int_x^b {g\left( t \right)dt} }
\right)} \right]} \right|
\\
\le \left[ {\frac{1}{2} + \left| {\frac{1}{2} - \alpha } \right|}
\right]\left[ {\frac{1}{2}\int_a^b {g\left( t \right)dt}  + \left|
{\int_a^x {g\left( t \right)dt}  - \frac{1}{2}\int_a^b {g\left( t
\right)dt} } \right|} \right] \cdot \bigvee_a^b \left( f \right)
\end{multline}
where, $\bigvee_a^b\left( {f} \right)$ denotes to the total
variation of $f$ over $[a,b]$. The constant $\left[ {\frac{1}{2} +
\left| {\frac{1}{2} - \alpha } \right|} \right]$ is the best
possible.
\end{theorem}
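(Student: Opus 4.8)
The plan is to write the expression under the modulus on the left of \eqref{liu.ineq} as a Riemann--Stieltjes integral of a piecewise Peano-type kernel against $df$, and then apply the classical estimate $\left|\int_c^d h(t)\,df(t)\right|\le \sup_{t\in[c,d]}|h(t)|\cdot\bigvee_c^d(f)$, valid whenever $h$ is continuous and $f$ is of bounded variation. Put $G(t):=\int_a^t g(s)\,ds$; since $g$ is continuous, $G$ is continuously differentiable with $G'=g$, nondecreasing, $G(a)=0$, and $\int_a^b f(t)g(t)\,dt=\int_a^b f(t)\,dG(t)$. Define the kernel
\[
K(t):=\begin{cases} G(t)-\alpha G(x), & t\in[a,x],\\[1mm] G(t)-(1-\alpha)G(b)-\alpha G(x), & t\in(x,b].\end{cases}
\]

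The first step is to establish, via integration by parts on $[a,x]$ and on $[x,b]$ separately, the identity
\begin{multline*}
\int_a^x K(t)\,df(t)+\int_x^b K(t)\,df(t)\\
=-\left\{\int_a^b f(t)g(t)\,dt-(1-\alpha)f(x)G(b)-\alpha\bigl(f(a)G(x)+f(b)(G(b)-G(x))\bigr)\right\}.
\end{multline*}
Each $\int K\,df$ exists because the relevant piece of $K$ is continuous and $f$ is of bounded variation, and on each piece $\int f\,dK=\int f\,dG=\int fg\,dt$ since $dK=dG=g\,dt$; collecting the boundary terms at $a$, $x$, $b$ and using $G(a)=0$ gives the identity. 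The constants $\alpha G(x)$ and $(1-\alpha)G(b)+\alpha G(x)$ are forced precisely by the requirement that the coefficients of $f(a)$, $f(x)$, $f(b)$ come out as $\alpha G(x)$, $(1-\alpha)G(b)$, $\alpha(G(b)-G(x))$. Splitting at $x$ sidesteps any issue caused by a common discontinuity of $K$ and $f$ there.

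Next, applying the classical estimate on each subinterval, the left-hand side of \eqref{liu.ineq} is at most
\[
\sup_{t\in[a,x]}|K(t)|\cdot\bigvee_a^x(f)+\sup_{t\in[x,b]}|K(t)|\cdot\bigvee_x^b(f)\le \sup_{t\in[a,b]}|K(t)|\cdot\bigvee_a^b(f).
\]
Since $G$ is continuous and nondecreasing, $G(t)$ runs over the whole of $[0,G(x)]$ on $[a,x]$ and over $[G(x),G(b)]$ on $[x,b]$, whence $\sup_{[a,x]}|K|=\max\{\alpha,1-\alpha\}\,G(x)$ and $\sup_{[x,b]}|K|=\max\{\alpha,1-\alpha\}\,(G(b)-G(x))$. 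Therefore $\sup_{[a,b]}|K|=\bigl[\tfrac12+|\tfrac12-\alpha|\bigr]\max\{G(x),G(b)-G(x)\}$, and the identity $\max\{A,B\}=\tfrac12(A+B)+\tfrac12|A-B|$ with $A=G(x)$, $B=G(b)-G(x)$ turns this into $\bigl[\tfrac12+|\tfrac12-\alpha|\bigr]\bigl[\tfrac12 G(b)+|G(x)-\tfrac12 G(b)|\bigr]$, which, since $G(b)=\int_a^b g$ and $G(x)=\int_a^x g$, is exactly the asserted bound.

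For the sharpness of the constant I would exhibit equality: take $x=b$, let $g$ be any admissible weight, and let $f$ equal $1$ at $b$ and $0$ on $[a,b)$ when $\alpha\le\tfrac12$, respectively $1$ at $a$ and $0$ on $(a,b]$ when $\alpha\ge\tfrac12$. Then $\bigvee_a^b(f)=1$, $\int_a^b fg=0$, and a direct substitution shows both sides of \eqref{liu.ineq} equal $(1-\alpha)\int_a^b g$ in the first case and $\alpha\int_a^b g$ in the second, i.e. $\bigl[\tfrac12+|\tfrac12-\alpha|\bigr]\int_a^b g$; hence the constant $\tfrac12+|\tfrac12-\alpha|$ cannot be replaced by anything smaller. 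The only genuinely delicate point I anticipate is the bookkeeping in the integration-by-parts identity — arranging the kernel so that all three boundary coefficients come out correctly at once; the supremum computation and the extremal example are then routine.
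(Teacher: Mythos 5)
Your proof is correct. A point of comparison worth making explicit: the paper does not actually prove this statement — it is quoted from Z.\ Liu \cite{Liu} as a known result, and the paper even leans on it (and on Liu's extremal functions) for the sharpness claim in its own Theorem \ref{thm1}. Your argument is, in substance, the same kernel method the paper uses to prove its generalization Theorem \ref{thm1}: your two-branch kernel $K(t)=G(t)-\alpha G(x)$ on $[a,x]$ and $K(t)=G(t)-(1-\alpha)G(b)-\alpha G(x)$ on $(x,b]$ is exactly the specialization of the paper's kernel $K_{g,u}(t;x)$ to $u(t)=t$ without the companion point $a+b-x$ (so two branches instead of three), the identity is obtained by integration by parts, and the bound comes from the estimate $\lvert\int p\,d\nu\rvert\le\sup\lvert p\rvert\cdot\bigvee(\nu)$, i.e.\ the paper's inequality \eqref{keyeq1}, followed by the same $\max\{A,B\}=\tfrac12(A+B)+\tfrac12\lvert A-B\rvert$ rewriting. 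Two small ways your write-up improves on the template: splitting the integral at $x$ and integrating by parts each continuous branch on its closed subinterval cleanly justifies the identity even when $f$ jumps at $x$ (the paper asserts the analogous identity in one line), and your sharpness verification (taking $x=b$ with a single-jump $f$ at $b$ for $\alpha\le\tfrac12$, respectively at $a$ for $\alpha\ge\tfrac12$, giving equality with both sides equal to $\bigl[\tfrac12+\lvert\tfrac12-\alpha\rvert\bigr]\int_a^b g$) is self-contained, whereas the paper simply refers the reader to Liu's examples. All steps check out, including the reduction $\int f\,dG=\int fg\,dt$ and the supremum computations on each branch.
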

For recent results concerning Ostrowski inequality for mappings of
bounded variation see
\cite{Dragomir1},\cite{Liu}--\cite{Tseng3}.\\

The main aim in this paper, is to introduce and discuss new
weighted generalizations of the Ostrowski and the generalized
trapezoid inequalities for the Riemann--Stieltjes integrals.

\section{The Results}

We begin with the following result:
\begin{theorem}\label{thm1}
Let $g,u: [a,b] \to [0, \infty)$ be such that $g$ is continuous
and positive on $[a,b]$ and $u$ is monotonic increasing on
$[a,b]$. If $f:[a,b] \to \mathbb{R}$ is a mapping of bounded
variation on $[a,b]$, then for any $x \in [a,b]$ and $\alpha \in
[0,1]$, we have
\begin{multline}
\label{eq2.1}\left| {\left( {1 - \alpha } \right)\left[ {f\left( x
\right)\int_a^{{\textstyle{{a + b} \over 2}}} {g\left( s
\right)du\left( s \right)}  + f\left( {a + b - x}
\right)\int_{{\textstyle{{a + b} \over 2}}}^b {g\left( s
\right)du\left( s \right)} } \right]} \right.
\\
\left. {+ \alpha \left[ {f\left( a \right)\int_a^x {g\left( s
\right)du\left( s \right)}  + f\left( b \right)\int_x^b {g\left( s
\right)du\left( s \right)} } \right] - \int_a^b {f\left( t
\right)g\left( t \right)du\left( t \right)}}\right|
\\
\le \left[ {\frac{1}{2} + \left| {\frac{1}{2} - \alpha } \right|}
\right]\left[ {\frac{1}{2}\int_a^b {g\left( t \right)du\left( t
\right)}  + \left| {\int_a^x {g\left( t \right)du\left( t \right)}
- \frac{1}{2}\int_a^b {g\left( t \right)du\left( t \right)} }
\right|} \right] \cdot \bigvee_a^b \left( f \right)
\end{multline}
where, $\bigvee_a^b\left( {f} \right)$ denotes to the total
variation of $f$ over $[a,b]$. The constant $\left[ {\frac{1}{2} +
\left| {\frac{1}{2} - \alpha } \right|} \right]$ is the best
possible.
\end{theorem}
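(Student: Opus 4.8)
The plan is to reduce the weighted Riemann--Stieltjes statement to an estimate for a single Riemann--Stieltjes integral against a piecewise-defined kernel, and then to exploit the standard inequality $\left|\int_a^b \varphi(t)\,df(t)\right| \le \sup_{t\in[a,b]}|\varphi(t)| \cdot \bigvee_a^b(f)$ for $f$ of bounded variation and $\varphi$ continuous. First I would introduce the new measure $dv(t) := g(t)\,du(t)$; since $g$ is continuous and positive and $u$ is monotonic increasing, $v$ is a well-defined monotonic increasing function on $[a,b]$, and $\int_a^b h(t)\,dv(t) = \int_a^b h(t)g(t)\,du(t)$ for continuous $h$. In particular set $G(x) := \int_a^x g(s)\,du(s) = v(x) - v(a)$, so $G$ is continuous, nondecreasing, with $G(a)=0$ and $G(b) = \int_a^b g\,du$. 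The key observation is that, because $v$ is continuous (here I use that $g\,du$ has no atoms — if $u$ has jumps one must be slightly careful, but the positivity/continuity of $g$ together with monotonicity of $u$ still gives a well-behaved Stieltjes measure, and the integration-by-parts formula for $\mathcal{RS}$-integrals applies), we may write the left-hand side of \eqref{eq2.1} as a single integral $\int_a^b K(t)\,df(t)$ for an explicit kernel $K$.

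Next I would identify $K$. Using integration by parts on each of the three pieces $\int_a^{(a+b)/2}$, $\int_{(a+b)/2}^b$, and $\int_a^b$ (i.e. $\int f g\,du = \int f\,dv = f(b)v(b) - f(a)v(a) - \int v\,df$, and similarly on subintervals), the $f(x)$, $f(a+b-x)$, $f(a)$, $f(b)$ terms are exactly the boundary terms produced, and the left-hand side collapses to $\int_a^b K(t)\,df(t)$ with
\begin{align*}
K(t) = \begin{cases} \alpha\, G(t) - (1-\alpha)\big[\tfrac{1}{2} G(b) - \big(G\text{-contributions}\big)\big], & t \in [a, x),\\[1mm] \cdots, & t \in [x, a+b-x),\\[1mm] \cdots, & t \in [a+b-x, b],\end{cases}
\end{align*}
where each branch is an affine expression in $G(t)$, $G(x)$, $G(a+b-x)$ and $G(b)$; in particular on the outer pieces $K$ looks like $\alpha G(t) - \tfrac{1}{2}(1-\alpha)G(b)$ shifted appropriately, and on the middle piece like $\big(\alpha-\tfrac12\big)$-type combinations. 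The precise bookkeeping of the four boundary terms is the routine-but-delicate part; I would carry it out carefully and then verify $K$ is continuous at $t=x$ and $t=a+b-x$ (the jumps of the affine pieces cancel against the point-evaluation terms $f(x)$, $f(a+b-x)$), which is what makes $\int K\,df$ legitimate.

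Having the representation, the bound follows from $\bigl|\int_a^b K\,df\bigr| \le \|K\|_{\infty,[a,b]} \bigvee_a^b(f)$, so the whole problem becomes computing $\max_{t\in[a,b]}|K(t)|$. Since $G$ is monotone, on each branch $K$ is monotone in $t$, so its extrema occur at the endpoints $a, x, a+b-x, b$; evaluating there and using $G(a)=0$, $G(b)=\int_a^b g\,du$, together with the elementary identity $\max\{A, B\} = \tfrac{1}{2}(A+B) + \tfrac12|A-B|$ and the observation that the parameter $\alpha$ enters only through $\big[\tfrac12 + |\tfrac12 - \alpha|\big] = \max\{\alpha, 1-\alpha\}$, one recovers exactly the right-hand side
$$\Bigl[\tfrac12 + \bigl|\tfrac12 - \alpha\bigr|\Bigr]\Bigl[\tfrac12\!\int_a^b g\,du + \Bigl|\!\int_a^x g\,du - \tfrac12\!\int_a^b g\,du\Bigr|\Bigr].$$ I expect \textbf{the main obstacle} to be precisely this supremum computation: one must check that no interior branch of $K$ can exceed the endpoint values, and organize the several cases ($x \le (a+b)/2$ versus $x \ge (a+b)/2$, and $\alpha \le \tfrac12$ versus $\alpha \ge \tfrac12$) so that the single clean expression above emerges in all of them. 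Finally, for sharpness of the constant $\big[\tfrac12 + |\tfrac12-\alpha|\big]$ I would follow Liu's strategy from Theorem~\ref{thm.liu}: take $x = a$ (or $x=b$), choose $g \equiv 1$ and $u(t) = t$ so the inequality reduces to the known Ostrowski–trapezoid case, and test against $f$ a suitable step function (e.g. $f$ jumping by $1$ at an endpoint), for which both sides are equal, forcing the constant to be best possible.
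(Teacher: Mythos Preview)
Your overall strategy is the paper's: represent the left-hand side as $\int_a^b K\,df$ for a three-piece kernel, bound by $\sup|K|\cdot\bigvee_a^b(f)$, and compute the supremum using monotonicity of $G(t)=\int_a^t g\,du$. The paper's kernel is explicitly
\[
K(t)=(1-\alpha)\int_c^t g\,du+\alpha\int_x^t g\,du,\qquad c=a,\ \tfrac{a+b}{2},\ b
\]
on the subintervals $[a,x]$, $(x,a+b-x]$, $(a+b-x,b]$ respectively; note that the partition points are $x$ and $a+b-x$, not $\tfrac{a+b}{2}$ as your integration-by-parts sketch suggests, and your guessed form ``$\alpha G(t)-\tfrac12(1-\alpha)G(b)$'' on the outer pieces is not correct (on $[a,x]$ one has $K(t)=G(t)-\alpha G(x)$). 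The sharpness argument via $u(t)=t$ and Liu's theorem is exactly what the paper does.

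There is one genuine conceptual slip. You claim you will ``verify $K$ is continuous at $t=x$ and $t=a+b-x$'' and that ``the jumps of the affine pieces cancel against the point-evaluation terms $f(x),f(a+b-x)$''. This is backwards. The kernel is \emph{not} continuous there --- for instance $K(x^-)=(1-\alpha)G(x)$ whereas $K(x^+)=(1-\alpha)\bigl[G(x)-G(\tfrac{a+b}{2})\bigr]$ --- and it is precisely these jumps that, when you integrate by parts on each subinterval \emph{separately}, generate the boundary terms $f(x)$ and $f(a+b-x)$ in the identity. The expression $\int_a^b K\,df$ must be read as the sum of three Riemann--Stieltjes integrals over the pieces; each exists because the restriction of $K$ to each piece is continuous, and the estimate $\sup|K|\cdot\bigvee_a^b(f)$ follows by applying the standard bound piecewise and summing. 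Your side remark that $g\,du$ ``has no atoms'' is likewise false whenever $u$ has jumps (since $g>0$), but this is harmless once the kernel is handled piecewise rather than via a global continuity claim.
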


\begin{proof}
Define the mapping
\begin{align*}
K_{g,u} \left( {t;x} \right): = \left\{ \begin{array}{l}
 \left( {1 - \alpha } \right)\int_a^t {g\left( s \right)du\left( s \right)}  + \alpha \int_x^t {g\left( s \right)du\left( s \right)} ,\,\,\,\,\,\,\,\,\,\,\,\,\,\,t \in \left[ {a,x} \right] \\
  \\
 \left( {1 - \alpha } \right)\int_{{\textstyle{{a + b} \over 2}}}^t {g\left( s \right)du\left( s \right)}  + \alpha \int_x^t {g\left( s \right)du\left( s \right)} ,\,\,\,\,\,\,\,t \in \left( {x,a + b - x} \right] \\
  \\
 \left( {1 - \alpha } \right)\int_b^t {g\left( s \right)du\left( s \right)}  + \alpha \int_x^t {g\left( s \right)du\left( s \right)} ,\,\,\,\,\,\,\,\,\,\,\,\,\,\,t \in \left( {a + b - x,b} \right] \\
 \end{array} \right.
\end{align*}
Using integration by parts, we have the following identity
\begin{align*}
\int_a^b {K_{g,u} \left( {t;x} \right)df\left( t \right)}  &=
\int_a^x {\left[ {\left( {1 - \alpha } \right)\int_a^t {g\left( s
\right)du\left( s \right)}  + \alpha \int_x^t {g\left( s
\right)du\left( s \right)} } \right]df\left( t \right)}
\\
&\qquad+ \int_x^{a + b - x} {\left[ {\left( {1 - \alpha }
\right)\int_{{\textstyle{{a + b} \over 2}}}^t {g\left( s
\right)du\left( s \right)}  + \alpha \int_x^t {g\left( s
\right)du\left( s \right)} } \right]df\left( t \right)}
\\
&\qquad + \int_{a + b - x}^b {\left[ {\left( {1 - \alpha }
\right)\int_b^t {g\left( s \right)du\left( s \right)}  + \alpha
\int_x^t {g\left( s \right)du\left( s \right)} } \right]df\left( t
\right)}
\\
&= \left( {1 - \alpha } \right)\left[ {f\left( x
\right)\int_a^{{\textstyle{{a + b} \over 2}}} {g\left( s
\right)du\left( s \right)}  + f\left( {a + b - x}
\right)\int_{{\textstyle{{a + b} \over 2}}}^b {g\left( s
\right)du\left( s \right)} } \right]
\\
&\qquad+ \alpha \left[ {f\left( a \right)\int_a^x {g\left( s
\right)du\left( s \right)}  + f\left( b \right)\int_x^b {g\left( s
\right)du\left( s \right)} } \right] - \int_a^b {f\left( t
\right)g\left( t \right)du\left( t \right)}
\end{align*}
Using the fact that for a continuous function $p:[a,b] \to
\mathbb{R}$ and a function $\nu:[a,b] \to \mathbb{R}$ of bounded
variation, then the Riemann--Stieltjes integral $\int_a^b {p\left(
t \right)d\nu\left( t \right)}$ exists and one has the inequality
\begin{align}
\label{keyeq1}\left| {\int_a^b {p\left( t \right)d\nu\left( t
\right)} } \right| \le \mathop {\sup }\limits_{t \in \left[ {a,b}
\right]} \left| {p\left( t \right)} \right| \bigvee_a^b\left( \nu
\right).
\end{align}
As $f$ is of bounded variation on $[a,b]$, by (\ref{keyeq1}) we
have
\begin{multline}
\label{eq2.3}\left| {\left( {1 - \alpha } \right)\left[ {f\left( x
\right)\int_a^{{\textstyle{{a + b} \over 2}}} {g\left( s
\right)du\left( s \right)}  + f\left( {a + b - x}
\right)\int_{{\textstyle{{a + b} \over 2}}}^b {g\left( s
\right)du\left( s \right)} } \right]} \right.
\\
\left. {+ \alpha \left[ {f\left( a \right)\int_a^x {g\left( s
\right)du\left( s \right)}  + f\left( b \right)\int_x^b {g\left( s
\right)du\left( s \right)} } \right] - \int_a^b {f\left( t
\right)g\left( t \right)du\left( t \right)}}\right|
\\
\le \mathop {\sup }\limits_{t \in \left[ {a,b} \right]} \left|
{K_{g,u} \left( {t;x} \right)} \right| \cdot \bigvee_a^b \left( f
\right).
\end{multline}
Now, define the mappings $p,q:[a,b] \to \mathbb{R}$ given by
\begin{align*}
p_1\left( t \right)&:= \left( {1 - \alpha } \right)\int_a^t
{g\left( s \right)du\left( s \right)}  + \alpha \int_x^t {g\left(
s \right)du\left( s \right)}
,\,\,\,\,\,\,\,\,\,\,\,\,\,\,\,\,\,\,\,\,t \in \left[ {a,x}
\right],
\\
p_2\left( t \right)&:=  \left( {1 - \alpha }
\right)\int_{{\textstyle{{a + b} \over 2}}}^t {g\left( s
\right)du\left( s \right)}  + \alpha \int_x^t {g\left( s
\right)du\left( s \right)} ,\,\,\,\,\,\,\,\,\,\,\,\,\,\,\,t \in
\left( {x,a + b - x} \right]
\\
p_3\left( t \right) &:= \left( {1 - \alpha } \right)\int_b^t
{g\left( s \right)du\left( s \right)}  + \alpha \int_x^t {g\left(
s \right)du\left( s \right)}
,\,\,\,\,\,\,\,\,\,\,\,\,\,\,\,\,\,\,\,\,t \in \left( {a + b -
x,b} \right]
\end{align*}
for all $\alpha \in [0,1]$, and $x \in [a,b]$. Since $g$ is
\emph{positive} continuous and $u$ is monotonic increasing on
$[a,b]$ then the Riemann--Stieltjes integral $\int_{a}^{b}
{g\left( s \right)du\left( s \right)}$ exists and \emph{positive}.
Also, since the derivative of the monotonic increasing function
$u$ is always positive, so that $\left( gu^{\prime} \right)\left(
t \right)>0$ a.e., it follows that, $p_1^{\prime}\left( t \right),
p_2^{\prime}\left( t \right), p_3^{\prime}\left( t \right)
>0$, almost everywhere on their corresponding domains. Therefore, we have
\begin{align*}
\mathop {\sup }\limits_{t \in \left[ {a,x} \right]} \left|
{K_{g,u} \left( {t;x} \right)} \right| &= \max \left\{ {\left( {1
- \alpha } \right)\int_a^x {g\left( s \right)du\left( s \right)}
,\alpha \int_a^x {g\left( s \right)du\left( s \right)} } \right\}
\\
&= \left[ {\frac{1}{2} + \left| {\frac{1}{2} - \alpha } \right|}
\right] \cdot \int_a^x {g\left( s \right)du\left( s \right)},
\end{align*}
\begin{align*}
&\mathop {\sup }\limits_{t \in \left( {x,a+b-x} \right]} \left|
{K_{g,u} \left( {t;x} \right)} \right|
\\
&= \max \left\{ {\left( {1 - \alpha }
\right)\int_x^{{\textstyle{{a + b} \over 2}}} {g\left( s
\right)du\left( s \right)} ,\alpha \int_x^{{\textstyle{{a + b}
\over 2}}} {g\left( s \right)du\left( s \right)}  +
\int_{{\textstyle{{a + b} \over 2}}}^{a + b - x} {g\left( s
\right)du\left( s \right)}} \right\}
\\
&= \frac{1}{2}\left[ {\int_x^{a + b - x} {g\left( s
\right)du\left( s \right)}  + \left( {1 - \alpha } \right)\left|
{\int_{{\textstyle{{a + b} \over 2}}}^{a + b - x} {g\left( s
\right)du\left( s \right)} } \right|} \right],
\end{align*}
and
\begin{align*}
\mathop {\sup }\limits_{t \in \left( {a+b-x,b} \right]} \left|
{K_{g,u} \left( {t;x} \right)} \right| &= \max \left\{ {\left( {1
- \alpha } \right)\int_x^b {g\left( s \right)du\left( s \right)}
,\alpha \int_x^b {g\left( s \right)du\left( s \right)} } \right\}
\\
&= \left[ {\frac{1}{2} + \left| {\frac{1}{2} - \alpha } \right|}
\right] \cdot \int_x^b {g\left( s \right)du\left( s \right)}.
\end{align*}
Thus
\begin{align}
\label{eq2.4}\mathop {\sup }\limits_{t \in \left[ {a,b} \right]}
\left| {K_{g,u} \left( {t;x} \right)} \right| &= \left[
{\frac{1}{2} + \left| {\frac{1}{2} - \alpha } \right|} \right]
\cdot \max \left\{ {\int_a^x {g\left( s \right)du\left( s \right)}
,\int_x^b {g\left( s \right)du\left( s \right)} } \right\}
\\
&= \left[ {\frac{1}{2} + \left| {\frac{1}{2} - \alpha } \right|}
\right] \cdot \left[ {\frac{1}{2}\int_a^b {g\left( s
\right)du\left( s \right)} + \left| {\int_a^x {g\left( s
\right)du\left( s \right)} - \frac{1}{2}\int_a^b {g\left( s
\right)du\left( s \right)} } \right|} \right].\nonumber
\end{align}
Therefore, by (\ref{eq2.3}) and (\ref{eq2.4}) we get
(\ref{eq2.1}). To prove that the constant $\frac{1}{2} + \left|
{\frac{1}{2} - \alpha } \right|$ is best possible for all $\alpha
\in [0,1]$, take $u(t)= t$ for all $t \in [a,b]$ and therefore, we
refer to (\ref{liu.ineq}). Thus, the sharpness follows from
(\ref{liu.ineq}), (consider $f$ and $g$ to be defined as in
\cite{Liu}). Hence, the proof is established and we shall omit the
details.
\end{proof}

\begin{corollary}
In Theorem \ref{thm1}, choose $\alpha = 0$, then we get
\begin{multline}
\left| {f\left( x
\right)\int_a^{{\textstyle{{a + b} \over 2}}} {g\left( s
\right)du\left( s \right)}  + f\left( {a + b - x}
\right)\int_{{\textstyle{{a + b} \over 2}}}^b {g\left( s
\right)du\left( s \right)}  - \int_a^b {f\left( t
\right)g\left( t \right)du\left( t \right)}}\right|
\\
\le \left[ {\frac{1}{2}\int_a^b {g\left( t \right)du\left( t
\right)}  + \left| {\int_a^x {g\left( t \right)du\left( t \right)}
- \frac{1}{2}\int_a^b {g\left( t \right)du\left( t \right)} }
\right|} \right] \cdot \bigvee_a^b \left( f \right).
\end{multline}
A general weighted version of the above Ostrowski inequality for
$\mathcal{RS}$ integrals, may be deduced as follows:
\begin{align}
\left| {f\left( x \right) -\frac{\int_a^b {f\left( t
\right)g\left( t \right)du\left( t \right)}}{\int_a^b {g\left( t
\right)du\left( t \right)}} }\right| \le \left[ {\frac{1}{2}  +
\left| {\frac{\int_a^x {g\left( t \right)du\left( t
\right)}}{\int_a^b {g\left( t \right)du\left( t \right)}} -
\frac{1}{2}} \right|} \right] \cdot \bigvee_a^b \left( f \right)
\end{align}
provided that $g(t)\ge 0$, for almost every $t\in [a,b]$ and
$\int_a^b {g\left( t \right)du\left( t \right)} \ne 0$.
\end{corollary}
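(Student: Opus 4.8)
The first displayed inequality is merely the case $\alpha=0$ of Theorem \ref{thm1}: in \eqref{eq2.1} the constant $\frac12+\left|\frac12-\alpha\right|$ collapses to $1$ and the $\alpha$-weighted term on the left drops out, so nothing is needed beyond substitution. The substance is the general weighted version, and the plan is to obtain it by the same device as in Theorem \ref{thm1} but applied to a simpler, non-companion kernel; it is in fact the Riemann--Stieltjes analogue of the $\alpha=0$ case of Liu's Theorem \ref{thm.liu}, with the Lebesgue weight $g(t)\,dt$ replaced by the Stieltjes measure $g(t)\,du(t)$, which is a positive measure since $g\ge0$ and $u$ is increasing.

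First I would introduce
\[
P_{g,u}(t):=\begin{cases}\displaystyle\int_a^t g(s)\,du(s),& t\in[a,x],\\[2mm]\displaystyle\int_b^t g(s)\,du(s),& t\in(x,b],\end{cases}
\]
and work on $[a,x]$ and on $[x,b]$ separately, exactly as the proof of Theorem \ref{thm1} handles the piecewise kernel $K_{g,u}$. On each of these subintervals $P_{g,u}$ is continuous and $f$ is of bounded variation, so each Riemann--Stieltjes integral exists; integration by parts on each piece, followed by adding, gives the Montgomery-type identity
\[
\int_a^x P_{g,u}(t)\,df(t)+\int_x^b P_{g,u}(t)\,df(t)=f(x)\int_a^b g(s)\,du(s)-\int_a^b f(t)g(t)\,du(t).
\]

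Next I would invoke \eqref{keyeq1} on each of $[a,x]$ and $[x,b]$, using $\bigvee_a^x(f)+\bigvee_x^b(f)=\bigvee_a^b(f)$ and that the supremum over each half is at most the supremum over $[a,b]$, to get
\[
\left|f(x)\int_a^b g\,du-\int_a^b fg\,du\right|\le \sup_{t\in[a,b]}\left|P_{g,u}(t)\right|\cdot\bigvee_a^b(f).
\]
Since $g>0$ and $u$ is increasing, $t\mapsto\int_a^t g\,du$ is nonnegative and nondecreasing on $[a,x]$ while $t\mapsto\int_t^b g\,du$ is nonnegative and nonincreasing on $[x,b]$, hence $\sup_{[a,b]}|P_{g,u}|=\max\left\{\int_a^x g\,du,\ \int_x^b g\,du\right\}$, and the elementary identity $\max\{A,B\}=\tfrac{A+B}{2}+\tfrac{|A-B|}{2}$ with $A+B=\int_a^b g\,du$ rewrites this as $\tfrac12\int_a^b g\,du+\left|\int_a^x g\,du-\tfrac12\int_a^b g\,du\right|$. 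Dividing the resulting inequality by $\int_a^b g\,du$ -- positive under the hypotheses of Theorem \ref{thm1}, and assumed nonzero in the stated generality $g\ge0$ a.e. -- yields the claim.

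The one point requiring care, and the one I expect to be the main obstacle, is the legitimacy of the integration-by-parts identity and of \eqref{keyeq1} when $P_{g,u}$ has a jump at $x$ while $f$ is only of bounded variation, so that the existence theorem quoted just before \eqref{keyeq1} (which assumes a continuous integrand) does not apply to a single integral over $[a,b]$. This is handled exactly as in Theorem \ref{thm1}: one never lets a single Stieltjes integral straddle $t=x$, so that on $[a,x]$ and on $[x,b]$ the integrand $P_{g,u}$ is genuinely continuous, the existence theorem and the sup-bound apply verbatim on each piece, and the two pieces are then combined as above.
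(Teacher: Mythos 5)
Your proposal is correct, and for the only part of the corollary that actually needs an argument it supplies exactly the right one. The first display is indeed pure substitution of $\alpha=0$ into \eqref{eq2.1}. For the ``general weighted version'' the paper offers no derivation at all, and you are right that it does \emph{not} follow from the $\alpha=0$ companion inequality by simply dividing through, since that inequality carries the extra term $f(a+b-x)\int_{(a+b)/2}^b g\,du$; what is needed is the non-companion kernel, and your $P_{g,u}$ is precisely the kernel $K_{g,u}$ that the paper itself writes down in its later remark on the Montgomery-type identity, so your integration-by-parts identity, the bound via \eqref{keyeq1} on each of $[a,x]$ and $[x,b]$, the evaluation $\sup|P_{g,u}|=\max\bigl\{\int_a^x g\,du,\int_x^b g\,du\bigr\}$ via $\max\{A,B\}=\tfrac{A+B}{2}+\tfrac{|A-B|}{2}$, and the final division by $\int_a^b g\,du>0$ reproduce the intended proof in the paper's own spirit. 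One small caveat: your assertion that $P_{g,u}$ is continuous on each half-interval uses more than is stated, since $t\mapsto\int_a^t g\,du$ inherits any jumps of the monotone integrator $u$; the existence of $\int P_{g,u}\,df$ (equivalently of $\int fg\,du$) therefore tacitly requires that $f$ and $u$ do not share one-sided discontinuities. This is not a defect relative to the paper, whose proof of Theorem \ref{thm1} makes the same implicit assumption (it even speaks of $u'$), but it is worth being aware that your ``continuity on each piece'' fix resolves only the jump at $x$, not possible jumps of $u$.
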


\begin{remark}
Choosing $\alpha = 1$ in (\ref{eq2.1}), then we get
\begin{multline}
\left| {f\left( a \right)\int_a^x {g\left( s \right)du\left( s
\right)} + f\left( b \right)\int_x^b {g\left( s \right)du\left( s
\right)}- \int_a^b {f\left( t \right)g\left( t \right)du\left( t
\right)} } \right|
\\
\le \left[ {\frac{1}{2}\int_a^b {g\left( t \right)du\left( t
\right)}  + \left| {\int_a^x {g\left( t \right)du\left( t \right)}
- \frac{1}{2}\int_a^b {g\left( t \right)du\left( t \right)} }
\right|} \right] \cdot \bigvee_a^b \left( f \right),
\end{multline}
which is  `the generalized trapezoid inequality for
$\mathcal{RS}$--integrals'
\end{remark}

\begin{corollary}
\label{cor1}In Theorem \ref{thm1}, let $g\left( t \right) = 1$ for
all $t \in [a,b]$. Then, we have the inequality
\begin{multline}
\label{eq2.8}\left| {\alpha \left[ {\left( {u\left( x \right) -
u\left( a \right)} \right)f\left( a \right) + \left( {u\left( b
\right) - u\left( x \right)} \right)f\left( b \right)} \right] + \left( {1 - \alpha } \right)\left\{ {\left[ {u\left( {\frac{{a + b}}{2}} \right) - u\left( a \right)} \right]f\left( x \right)} \right.}
\right.
\\
\left. {+ \left. {\left[ {u\left( b \right) - u\left( {\frac{{a + b}}{2}} \right)} \right]f\left( {a + b - x} \right)} \right\}
- \int_a^b {f\left( t
\right)du\left( t \right)}}\right|
\\
\le \left[ {\frac{1}{2} + \left| {\frac{1}{2} - \alpha } \right|}
\right] \cdot\left[ {\frac{{u\left( b \right) - u\left( a
\right)}}{2} + \left| {u\left( x \right) - \frac{{u\left( a
\right) + u\left( b \right)}}{2}} \right|} \right] \cdot
\bigvee_a^b \left( f \right).
\end{multline}
The constant $\left[ {\frac{1}{2} + \left| {\frac{1}{2} - \alpha }
\right|} \right]$ is the best possible.

For instance,
\begin{itemize}
\item If $\alpha = 0$, then we get
\begin{multline}
\left| {\left\{ {\left[ {u\left( {\frac{{a + b}}{2}} \right) - u\left( a \right)} \right]f\left( x \right)
+ \left[ {u\left( b \right) - u\left( {\frac{{a + b}}{2}} \right)} \right]f\left( {a + b - x} \right)} \right\}
- \int_a^b {f\left( t
\right)du\left( t \right)}}\right|
\\
\le \left[ {\frac{{u\left( b \right) - u\left( a \right)}}{2} +
\left| {u\left( x \right) - \frac{{u\left( a \right) + u\left( b
\right)}}{2}} \right|} \right] \cdot \bigvee_a^b \left( f \right).
\end{multline}

\item If $\alpha = \frac{1}{3}$, then we get
\begin{multline}
\label{eq2.10}\left| {\frac{1}{3} \left[ {\left( {u\left( x \right) -
u\left( a \right)} \right)f\left( a \right) + \left( {u\left( b
\right) - u\left( x \right)} \right)f\left( b \right)} \right] + \frac{2}{3}\left\{ {\left[ {u\left( {\frac{{a + b}}{2}} \right) - u\left( a \right)} \right]f\left( x \right)} \right.}
\right.
\\
\left. {+ \left. {\left[ {u\left( b \right) - u\left( {\frac{{a + b}}{2}} \right)} \right]f\left( {a + b - x} \right)} \right\}
- \int_a^b {f\left( t
\right)du\left( t \right)}}\right|
\\
\le \frac{2}{3}\left[ {\frac{{u\left( b \right) - u\left( a
\right)}}{2} + \left| {u\left( x \right) - \frac{{u\left( a
\right) + u\left( b \right)}}{2}} \right|} \right] \cdot
\bigvee_a^b \left( f \right).
\end{multline}

\item If $\alpha = \frac{1}{2}$, then we get
\begin{multline}
\left| {\frac{1}{2}\left\{ {\left( {u\left( x \right) -
u\left( a \right)} \right)f\left( a \right) + \left( {u\left( b
\right) - u\left( x \right)} \right)f\left( b \right)+ \left[ {u\left( {\frac{{a + b}}{2}} \right) - u\left( a \right)} \right]f\left( x \right)} \right.}
\right.
\\
\left. {+ \left. {\left[ {u\left( b \right) - u\left( {\frac{{a + b}}{2}} \right)} \right]f\left( {a + b - x} \right)} \right\}
- \int_a^b {f\left( t
\right)du\left( t \right)}}\right|
\\
\le \frac{1}{2}\left[ {\frac{{u\left( b \right) - u\left( a
\right)}}{2} + \left| {u\left( x \right) - \frac{{u\left( a
\right) + u\left( b \right)}}{2}} \right|} \right] \cdot
\bigvee_a^b \left( f \right).
\end{multline}

\item If $\alpha = 1$, then we get
\begin{multline}
\left| {\left[ {u\left( x \right) - u\left( a \right)}
\right]f\left( a \right) + \left[ {u\left( b \right) - u\left( x
\right)} \right]f\left( b \right)- \int_a^b {f\left( t
\right)du\left( t \right)}}\right|
\\
\le \left[ {\frac{{u\left( b \right) - u\left( a \right)}}{2} +
\left| {u\left( x \right) - \frac{{u\left( a \right) + u\left( b
\right)}}{2}} \right|} \right] \cdot \bigvee_a^b \left( f \right).
\end{multline}
\end{itemize}
\end{corollary}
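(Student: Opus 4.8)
The plan is to read off Corollary \ref{cor1} directly from Theorem \ref{thm1} by taking the weight to be constant, $g(t)\equiv 1$ on $[a,b]$. The only fact to exploit is that a constant integrand makes every Riemann--Stieltjes integral degenerate: for any $c\le d$ in $[a,b]$ one has $\int_c^d 1\,du(s)=u(d)-u(c)$. Thus all the integrals $\int g\,du$ occurring in (\ref{eq2.1}) become differences of values of $u$, and nothing beyond bookkeeping is required; no new estimate is needed.

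Concretely, I would substitute $g\equiv 1$ into the left-hand side of (\ref{eq2.1}), replacing $\int_a^{(a+b)/2}g\,du$ by $u\!\left(\tfrac{a+b}{2}\right)-u(a)$, $\int_{(a+b)/2}^{b}g\,du$ by $u(b)-u\!\left(\tfrac{a+b}{2}\right)$, $\int_a^x g\,du$ by $u(x)-u(a)$, $\int_x^b g\,du$ by $u(b)-u(x)$, and $\int_a^b f\,g\,du$ by $\int_a^b f\,du$; regrouping the $(1-\alpha)$-- and $\alpha$--terms reproduces verbatim the expression inside the modulus in (\ref{eq2.8}). On the right-hand side $\tfrac12\int_a^b g\,du$ becomes $\tfrac{u(b)-u(a)}{2}$, and the crucial algebraic simplification is
\[
\int_a^x g\,du-\tfrac12\int_a^b g\,du=\bigl(u(x)-u(a)\bigr)-\tfrac{u(b)-u(a)}{2}=u(x)-\tfrac{u(a)+u(b)}{2},
\]
so the bracket in (\ref{eq2.1}) turns into $\left[\tfrac{u(b)-u(a)}{2}+\left|u(x)-\tfrac{u(a)+u(b)}{2}\right|\right]$; multiplying by $\left[\tfrac12+\left|\tfrac12-\alpha\right|\right]\bigvee_a^b(f)$ gives the right-hand side of (\ref{eq2.8}). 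The four displayed instances $\alpha\in\{0,\tfrac13,\tfrac12,1\}$ then follow by plugging in $\alpha$ and the corresponding value $1,\tfrac23,\tfrac12,1$ of the factor $\tfrac12+\left|\tfrac12-\alpha\right|$.

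It remains to check that the constant $\left[\tfrac12+\left|\tfrac12-\alpha\right|\right]$ stays best possible once $g\equiv 1$ is fixed. Here I would take $u(t)=t$ (admissible, being strictly increasing), so that $\int_a^b f\,du=\int_a^b f\,dt$, and evaluate (\ref{eq2.8}) at $x=\tfrac{a+b}{2}$: then $a+b-x=x$, the term $f(a+b-x)$ merges with $f(x)$, and (\ref{eq2.8}) becomes precisely Liu's inequality (\ref{liu.ineq}) specialised to $g\equiv 1$ and $x=\tfrac{a+b}{2}$, whose constant is optimal by Theorem \ref{thm.liu}. To keep this self-contained I would exhibit explicit extremal functions in the spirit of Liu's construction: for $\alpha\le\tfrac12$ take $f$ equal to $1$ at $\tfrac{a+b}{2}$ and $0$ elsewhere, so that $\bigvee_a^b(f)=2$, $\int_a^b f\,dt=0$, and both sides of (\ref{eq2.8}) equal $(1-\alpha)(b-a)$; for $\alpha\ge\tfrac12$ take $f$ equal to $1$ at $b$ and $0$ elsewhere, so that $\bigvee_a^b(f)=1$, $\int_a^b f\,dt=0$, and both sides equal $\alpha\,\tfrac{b-a}{2}$. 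In each regime equality is attained, so no smaller constant is admissible.

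I expect the inequality itself to be entirely mechanical (a substitution into Theorem \ref{thm1}); the one genuinely delicate step is the sharpness, where one must realise the extremal pair $(f,u)$ inside the narrowed class $g\equiv 1$, and — because the left-hand side of (\ref{eq2.8}) carries the extra evaluation $f(a+b-x)$ absent from the classical Ostrowski setting — place the single-jump extremal functions at the symmetric point $x=\tfrac{a+b}{2}$, where that extra term collapses, rather than at an endpoint.
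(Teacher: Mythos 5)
Your proposal is correct and follows essentially the same route as the paper: the inequality is obtained by the mechanical substitution $g\equiv 1$ in Theorem \ref{thm1}, and sharpness is shown with $u(t)=t$, $x=\tfrac{a+b}{2}$, and single-jump functions of bounded variation (your choices — a jump at the midpoint for $\alpha\le\tfrac12$ and at $b$ for $\alpha\ge\tfrac12$ — differ from the paper's only by scaling and by placing the second jump at $b$ rather than $a$, which is immaterial). Your explicit extremal functions also make the argument self-contained, which is needed since appealing to Theorem \ref{thm.liu} alone would not immediately give sharpness at the fixed point $x=\tfrac{a+b}{2}$.
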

\begin{proof}
The results follow by Theorem \ref{thm1}. It remains to prove the
sharpness of (\ref{eq2.8}). Suppose $0 \le \alpha \le
\frac{1}{2}$, assume that (\ref{eq2.8}) holds with constant
$C_1>0$, i.e.,
\begin{multline}
\label{eqK}\left| {\alpha \left[ {\left( {u\left( x \right) -
u\left( a \right)} \right)f\left( a \right) + \left( {u\left( b
\right) - u\left( x \right)} \right)f\left( b \right)} \right] + \left( {1 - \alpha } \right)\left\{ {\left[ {u\left( {\frac{{a + b}}{2}} \right) - u\left( a \right)} \right]f\left( x \right)} \right.}
\right.
\\
\left. {+ \left. {\left[ {u\left( b \right) - u\left( {\frac{{a + b}}{2}} \right)} \right]f\left( {a + b - x} \right)} \right\}
- \int_a^b {f\left( t
\right)du\left( t \right)}}\right|
\\
\le C_1 \cdot\left[ {\frac{{u\left( b \right) - u\left( a
\right)}}{2} + \left| {u\left( x \right) - \frac{{u\left( a
\right) + u\left( b \right)}}{2}} \right|} \right] \cdot
\bigvee_a^b \left( f \right).
\end{multline}
Let $f,u:[a,b] \to \mathbb{R}$ be defined as follows $u\left( t
\right) = t$ and
\begin{align*}
f\left( t \right) = \left\{
\begin{array}{l}
 0,\,\,\,\,\,\,\,\,t \in \left[ {a,b} \right]\backslash \left\{ {\frac{{a + b}}{2}} \right\} \\
  \\
 \frac{1}{2},\,\,\,\,\,\,\,t = \frac{{a + b}}{2} \\
 \end{array} \right.,
\end{align*}
which follows that $\bigvee_a^b\left( f \right) = 1$ and $\int_a^b
{f\left( t \right)du\left( t \right)}  = 0$, setting $x =
\frac{a+b}{2}$ it gives by (\ref{eqK})
\begin{align*}
\left( {1 - \alpha } \right)\frac{\left( {b-a} \right)}{2} \le
C_1\frac{\left( {b-a} \right)}{2}.
\end{align*}
which proves that $C_1 \ge 1 - \alpha$, and therefore $1- \alpha$
is the best possible for all $0 \le \alpha \le \frac{1}{2}$.

Now, suppose $\frac{1}{2} \le \alpha \le 1$ and assume that
(\ref{eq2.8}) holds with constant $C_2>0$, i.e.,
\begin{multline}
\label{eqKK}\left| {\alpha \left[ {\left( {u\left( x \right) -
u\left( a \right)} \right)f\left( a \right) + \left( {u\left( b
\right) - u\left( x \right)} \right)f\left( b \right)} \right] + \left( {1 - \alpha } \right)\left\{ {\left[ {u\left( {\frac{{a + b}}{2}} \right) - u\left( a \right)} \right]f\left( x \right)} \right.}
\right.
\\
\left. {+ \left. {\left[ {u\left( b \right) - u\left( {\frac{{a + b}}{2}} \right)} \right]f\left( {a + b - x} \right)} \right\}
- \int_a^b {f\left( t
\right)du\left( t \right)}}\right|
\\
\le C_2 \cdot\left[ {\frac{{u\left( b \right) - u\left( a
\right)}}{2} + \left| {u\left( x \right) - \frac{{u\left( a
\right) + u\left( b \right)}}{2}} \right|} \right] \cdot
\bigvee_a^b \left( f \right).
\end{multline}
Let $f,u:[a,b] \to \mathbb{R}$ be defined as follows $u\left( t
\right) = t$ and
\begin{align*}
f\left( t \right) = \left\{
\begin{array}{l}
 0,\,\,\,\,\,\,\,\,t \in \left( {a,b} \right]\\
  \\
 1,\,\,\,\,\,\,\,\,t = a \\
 \end{array} \right.,
\end{align*}
which follows that $\bigvee_a^b\left( f \right) = 1$ and $\int_a^b
{f\left( t \right)du\left( t \right)}  = 0$, setting $x =
\frac{a+b}{2}$ it gives by (\ref{eqKK})
\begin{align*}
\alpha \frac{\left( {b-a} \right)}{2} \le C_2\frac{\left( {b-a}
\right)}{2}.
\end{align*}
which proves that $C_2 \ge \alpha$, and therefore $\alpha$ is the
best possible for all $\frac{1}{2} \le \alpha \le 1$.
Consequently, we can conclude that the constant $\left[
{\frac{1}{2} + \left| {\frac{1}{2} - \alpha } \right|} \right]$ is
the best possible, for all $\alpha\in [0,1]$.
\end{proof}

\begin{corollary}
In (\ref{eq2.16}), setting $x = \frac{a+b}{2}$ then we have the
following Simpson type inequality for Riemann--Stieltjes integral:
\begin{multline}
\left| {\frac{1}{3}\left\{ {\left[ {u\left( {\frac{a+b}{2}}
\right) - u\left( a \right)} \right]f\left( a \right) + 2 \left[
{u\left( b \right) - u\left( a \right)} \right]f\left(
{\frac{a+b}{2}} \right) } \right.} \right.
\\
\left. {\left. {+ \left[ {u\left( b \right) - u\left(
{\frac{a+b}{2}} \right)} \right]f\left( b \right)} \right\} -
\int_a^b {f\left( t \right)du\left( t \right)}}\right|
\\
\le \frac{2}{3}\left[ {\frac{{u\left( b \right) - u\left( a
\right)}}{2} + \left| {u\left( {\frac{a+b}{2}} \right) -
\frac{{u\left( a \right) + u\left( b \right)}}{2}} \right|}
\right] \cdot \bigvee_a^b \left( f \right).
\end{multline}
The constant $\frac{2}{3}$ is the best possible.
\end{corollary}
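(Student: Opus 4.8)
The plan is to derive the asserted Simpson-type inequality simply by specializing inequality (\ref{eq2.10}) --- the case $\alpha = \tfrac{1}{3}$ of Corollary \ref{cor1} --- to the midpoint $x = \tfrac{a+b}{2}$, and then to confirm the optimality of the constant $\tfrac{2}{3}$ by exhibiting an extremal pair $(f,u)$ exactly as in the sharpness part of the proof of Corollary \ref{cor1}.

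First I would put $x = \tfrac{a+b}{2}$ in (\ref{eq2.10}). The point is that then $a + b - x = \tfrac{a+b}{2} = x$, so the two evaluations $f(x)$ and $f(a+b-x)$ both become $f\!\left(\tfrac{a+b}{2}\right)$, and their coefficients $\left[u\!\left(\tfrac{a+b}{2}\right) - u(a)\right]$ and $\left[u(b) - u\!\left(\tfrac{a+b}{2}\right)\right]$ add up to $u(b) - u(a)$. Hence the term $\tfrac{2}{3}\left\{\cdots\right\}$ on the left of (\ref{eq2.10}) becomes $\tfrac{2}{3}\left[u(b) - u(a)\right]f\!\left(\tfrac{a+b}{2}\right)$, and together with $\tfrac{1}{3}\left[\left(u\!\left(\tfrac{a+b}{2}\right) - u(a)\right)f(a) + \left(u(b) - u\!\left(\tfrac{a+b}{2}\right)\right)f(b)\right]$ the left-hand side collects precisely into
\begin{align*}
\left|\frac{1}{3}\left\{\left[u\!\left(\tfrac{a+b}{2}\right) - u(a)\right]f(a) + 2\left[u(b) - u(a)\right]f\!\left(\tfrac{a+b}{2}\right) + \left[u(b) - u\!\left(\tfrac{a+b}{2}\right)\right]f(b)\right\} - \int_a^b f(t)\,du(t)\right|,
\end{align*}
while the right-hand side of (\ref{eq2.10}) becomes $\tfrac{2}{3}\left[\tfrac{u(b)-u(a)}{2} + \left|u\!\left(\tfrac{a+b}{2}\right) - \tfrac{u(a)+u(b)}{2}\right|\right]\bigvee_a^b(f)$. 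This is exactly the claimed inequality, so no further work is needed for the bound itself.

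For the sharpness, I would reuse the test functions from the $0 \le \alpha \le \tfrac{1}{2}$ case in the proof of Corollary \ref{cor1}. Assume the inequality holds with a constant $C > 0$ in place of $\tfrac{2}{3}$, take $u(t) = t$ and
\begin{align*}
f(t) = \begin{cases} 0, & t \in [a,b]\setminus\left\{\tfrac{a+b}{2}\right\},\\ \tfrac{1}{2}, & t = \tfrac{a+b}{2}, \end{cases}
\end{align*}
so that $\bigvee_a^b(f) = 1$ and $\int_a^b f(t)\,du(t) = 0$. Then the left-hand side equals $\left|\tfrac{1}{3}\cdot 2(b-a)\cdot\tfrac{1}{2}\right| = \tfrac{b-a}{3}$, while the bracketed factor on the right equals $\tfrac{b-a}{2}$; thus $\tfrac{b-a}{3} \le C\cdot\tfrac{b-a}{2}$, which forces $C \ge \tfrac{2}{3}$. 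Combined with the previous paragraph, this shows $\tfrac{2}{3}$ is best possible. I do not expect any genuine difficulty: the whole argument is a routine specialization together with the same extremal computation already carried out in the paper, and the only mild care needed is the coefficient bookkeeping that produces the factor $2$ in front of $f\!\left(\tfrac{a+b}{2}\right)$ when $x$ is taken to be the midpoint.
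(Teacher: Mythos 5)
Your proposal is correct and follows essentially the same route as the paper: the inequality is obtained by specializing the $\alpha=\tfrac{1}{3}$ case (\ref{eq2.10}) of Corollary \ref{cor1} to $x=\tfrac{a+b}{2}$ (the reference to (\ref{eq2.16}) in the statement is a mislabel), and your coefficient bookkeeping giving the factor $2\left[u(b)-u(a)\right]f\!\left(\tfrac{a+b}{2}\right)$ is exactly right. Your sharpness argument with $u(t)=t$ and $f$ equal to $\tfrac12$ only at the midpoint is the same extremal computation the paper performs for $0\le\alpha\le\tfrac12$ at $x=\tfrac{a+b}{2}$, which yields $C\ge\tfrac23$ as claimed.
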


\begin{remark}
For recent three-point quadrature rules and related inequalities
regarding Riemann--Stieltjes integrals, the reader may refer to
the work \cite{alomari3}.
\end{remark}

\begin{corollary}
In (\ref{eq2.8}), let $u(t) = t$ for all $t \in [a,b]$, then we
get
\begin{multline}
\left| {\alpha \left( {\left( {x - a} \right)f\left( a \right) +
\left( {b - x} \right)f\left( b \right)} \right) +
\frac{1}{2}\left( {1 - \alpha } \right)\left( {b - a} \right)\left( {f\left( x \right) + f\left( {a + b - x} \right)} \right)
- \int_a^b
{f\left( t \right)dt}}\right|
\\
\le \left[ {\frac{1}{2} + \left| {\frac{1}{2} - \alpha } \right|}
\right] \cdot\left[ {\frac{{b - a}}{2} + \left| {x - \frac{{a +
b}}{2}} \right|} \right] \cdot \bigvee_a^b \left( f \right).
\end{multline}
For $x = \frac{a+b}{2}$, we have
\begin{multline}
\left| {\left( {b - a} \right) \left[ {\alpha  \frac{f\left( a
\right) + f\left( b \right)}{2} + \left( {1 - \alpha }
\right)f\left( {\frac{a+b}{2}} \right)} \right]- \int_a^b {f\left(
t \right)dt}}\right|
\\
\le \left[ {\frac{1}{2} + \left| {\frac{1}{2} - \alpha } \right|}
\right] \cdot\frac{\left( {b-a} \right)}{2} \cdot \bigvee_a^b
\left( f \right).
\end{multline}
\end{corollary}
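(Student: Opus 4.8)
The plan is to specialize Corollary~\ref{cor1}, that is, inequality~(\ref{eq2.8}), to the identity integrator $u(t) = t$. First I would record the elementary evaluations this choice produces: $u(x) - u(a) = x - a$, $u(b) - u(x) = b - x$, and $u\!\left(\frac{a+b}{2}\right) - u(a) = u(b) - u\!\left(\frac{a+b}{2}\right) = \frac{b-a}{2}$, while the Riemann--Stieltjes integral $\int_a^b f(t)\,du(t)$ collapses to the ordinary Riemann integral $\int_a^b f(t)\,dt$ (which exists, since $f$ is of bounded variation and hence Riemann integrable). Substituting these into the left-hand side of~(\ref{eq2.8}) turns the block $(1-\alpha)\bigl\{[u(\tfrac{a+b}{2}) - u(a)]f(x) + [u(b) - u(\tfrac{a+b}{2})]f(a+b-x)\bigr\}$ into $\tfrac{1}{2}(1-\alpha)(b-a)\bigl(f(x) + f(a+b-x)\bigr)$, which is exactly the grouping displayed in the statement.

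For the right-hand side, the same substitution gives $\frac{u(b)-u(a)}{2} = \frac{b-a}{2}$ and $u(x) - \frac{u(a)+u(b)}{2} = x - \frac{a+b}{2}$, so the bracket $\left[\frac{u(b)-u(a)}{2} + \left|u(x) - \frac{u(a)+u(b)}{2}\right|\right]$ becomes $\left[\frac{b-a}{2} + \left|x - \frac{a+b}{2}\right|\right]$, while the factor $\left[\frac{1}{2} + \left|\frac{1}{2} - \alpha\right|\right] \cdot \bigvee_a^b(f)$ is left untouched. Collecting these identifications reproduces the first inequality of the corollary verbatim.

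For the second inequality I would simply put $x = \frac{a+b}{2}$ in the inequality just obtained. Then $x - a = b - x = \frac{b-a}{2}$ and $a + b - x = \frac{a+b}{2} = x$, so $f(x) + f(a+b-x) = 2f\!\left(\frac{a+b}{2}\right)$; after factoring $b-a$ out of every term, the left-hand side becomes $\left|(b-a)\left[\alpha\frac{f(a)+f(b)}{2} + (1-\alpha)f\!\left(\frac{a+b}{2}\right)\right] - \int_a^b f(t)\,dt\right|$. On the right-hand side $\left|x - \frac{a+b}{2}\right| = 0$, so the geometric bracket collapses to $\frac{b-a}{2}$, which yields the asserted bound.

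There is no genuine obstacle here: the whole argument is a direct specialization of an already established inequality, and the only care needed is the routine bookkeeping of the two substitutions $u(t) = t$ and $x = \frac{a+b}{2}$. Any sharpness claim would be inherited from Corollary~\ref{cor1}, whose extremal configurations already use $u(t) = t$, so no new extremal example need be produced.
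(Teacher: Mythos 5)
Your proposal is correct and follows exactly the route the paper intends: the corollary is a direct specialization of (\ref{eq2.8}) obtained by substituting $u(t)=t$ (so that $u(x)-u(a)=x-a$, $u(b)-u(x)=b-x$, $u(\tfrac{a+b}{2})-u(a)=u(b)-u(\tfrac{a+b}{2})=\tfrac{b-a}{2}$) and then setting $x=\tfrac{a+b}{2}$ for the second displayed inequality. The bookkeeping in your computation matches the stated bounds, so nothing further is needed.
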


\begin{remark}
Under the assumptions of Theorem \ref{thm1}, a weighted
generalization of Montgomery's type identity for
Riemann--Stieltjes integrals may be deduced as follows:
\begin{align*}
f\left( x \right) = \frac{1}{\int_a^b {g\left( s \right)du\left( s
\right)}}\int_a^b {K_{g,u} \left( {t;x} \right)df\left( t \right)}
+ \frac{1}{\int_a^b {g\left( s \right)du\left( s \right)}}\int_a^b
{f\left( t \right)g\left( t \right)du\left( t \right)},
\end{align*}
for all $x \in \left[ {a,b} \right]$, where
\begin{align*}
K_{g,u} \left( {t;x} \right): = \left\{ \begin{array}{l}
 \int_a^t {g\left( s \right)du\left( s \right)} ,\,\,\,\,\,\,\,t \in \left[ {a,x} \right] \\
  \\
 \int_b^t {g\left( s \right)du\left( s \right)} ,\,\,\,\,\,\,\,t \in \left( {x,b} \right] \\
 \end{array} \right..
\end{align*}
Provided that $\int_a^b {g\left( s \right)du\left( s \right)} \ne
0$.
\end{remark}

\section{On $L$-Lipschitz integrators}

\begin{theorem}\label{thm2}
Let $g$ be as in Theorem \ref{thm1}. Let $u: [a,b] \to [0,
\infty)$ be of bounded variation on $[a,b]$. If $f:[a,b] \to
\mathbb{R}$ is $L$--Lipschitzian on $[a,b]$, then for any $x \in
[a,b]$ and $\alpha \in [0,1]$, we have
\begin{multline}
\label{eq2.13}\left| {\left( {1 - \alpha } \right)\left[ {f\left( x
\right)\int_a^{{\textstyle{{a + b} \over 2}}} {g\left( s
\right)du\left( s \right)}  + f\left( {a + b - x}
\right)\int_{{\textstyle{{a + b} \over 2}}}^b {g\left( s
\right)du\left( s \right)} } \right]} \right.
\\
\left. {+ \alpha \left[ {f\left( a \right)\int_a^x {g\left( s
\right)du\left( s \right)}  + f\left( b \right)\int_x^b {g\left( s
\right)du\left( s \right)} } \right] - \int_a^b {f\left( t
\right)g\left( t \right)du\left( t \right)}}\right|
\\
\le L \cdot \max \left\{ {\left( {x - a} \right) \cdot\mathop
{\sup }\limits_{t \in \left[ {a,x} \right]} \left\{ {M\left( t
\right)} \right\},\left( {b - x} \right) \cdot\mathop {\sup
}\limits_{t \in \left[ {x,b} \right]} \left\{ {N\left( t \right)}
\right\}} \right\} \cdot \bigvee_a^b \left( u \right)
\end{multline}
where,
\begin{align*}
M\left( t \right):=\max \left\{ {\left( {1 - \alpha }
\right)\mathop {\sup }\limits_{s \in \left[ {a,t} \right]} \left|
{g\left( s \right)} \right|,\alpha \mathop {\sup }\limits_{s \in
\left[ {t,x} \right]} \left| {g\left( s \right)} \right|}
\right\},
\end{align*}
and
\begin{align*}
N\left( t \right):= \max \left\{ {\left( {1 - \alpha }
\right)\mathop {\sup }\limits_{s \in \left[ {t,b} \right]} \left|
{g\left( s \right)} \right|,\alpha \mathop {\sup }\limits_{s \in
\left[ {t,x} \right]} \left| {g\left( s \right)} \right|}
\right\}.
\end{align*}
\end{theorem}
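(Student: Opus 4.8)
The strategy mirrors the proof of Theorem~\ref{thm1}: build the same auxiliary kernel $K_{g,u}(t;x)$, use integration by parts to obtain the identity
\begin{align*}
\int_a^b K_{g,u}(t;x)\,df(t) &= (1-\alpha)\left[f(x)\int_a^{\frac{a+b}{2}} g\,du + f(a+b-x)\int_{\frac{a+b}{2}}^b g\,du\right] \\
&\quad + \alpha\left[f(a)\int_a^x g\,du + f(b)\int_x^b g\,du\right] - \int_a^b f(t)g(t)\,du(t),
\end{align*}
and then estimate the left-hand side differently, exploiting the Lipschitz hypothesis on $f$ rather than its bounded variation. The key tool this time is the companion estimate: if $p$ is Riemann--Stieltjes integrable with respect to $\nu$ of bounded variation and $f$ is $L$-Lipschitz, then $\int_a^b p(t)\,df(t)$ exists and $\left|\int_a^b p(t)\,df(t)\right| \le L\int_a^b |p(t)|\,dt$. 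Wait---the bound we actually need on the right involves $\bigvee_a^b(u)$, so in fact the appropriate inequality is of a different flavour: since $K_{g,u}$ itself is built from $\mathcal{RS}$-integrals $\int g\,du$ against the bounded-variation integrator $u$, and $f$ is Lipschitz, I would write $\int_a^b K_{g,u}(t;x)\,df(t)$ and bound it by $L\int_a^b |K_{g,u}(t;x)|\,dt$, but the stated right-hand side is $L\cdot(\text{const})\cdot\bigvee_a^b(u)$, so the correct route is to recognize that $|K_{g,u}(t;x)| \le |\int_{c}^{t} g\,du|$-type terms which are themselves bounded by $(\sup|g|)\cdot\bigvee(u)$ over the relevant subinterval; more precisely each of $p_1,p_2,p_3$ is a combination of $\mathcal{RS}$-integrals of $g$ against $u$.

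Concretely, I would split $\int_a^b K_{g,u}(t;x)\,df(t) = \int_a^x p_1\,df + \int_x^{a+b-x} p_2\,df + \int_{a+b-x}^b p_3\,df$ and bound each piece using the Lipschitz property: $\left|\int_a^x p_1(t)\,df(t)\right| \le L\int_a^x |p_1(t)|\,dt \le L(x-a)\sup_{t\in[a,x]}|p_1(t)|$, and similarly for the other two. For $p_1(t) = (1-\alpha)\int_a^t g\,du + \alpha\int_x^t g\,du$ with $t\in[a,x]$, I estimate $\left|\int_a^t g\,du\right| \le \sup_{s\in[a,t]}|g(s)|\cdot\bigvee_a^t(u)$ and $\left|\int_x^t g\,du\right| = \left|\int_t^x g\,du\right| \le \sup_{s\in[t,x]}|g(s)|\cdot\bigvee_t^x(u)$, so $|p_1(t)| \le M(t)\cdot\bigvee_a^x(u) \le M(t)\cdot\bigvee_a^b(u)$ using $\bigvee_a^t(u),\bigvee_t^x(u)\le\bigvee_a^x(u)$ and subadditivity of total variation. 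An analogous bound with $N(t)$ handles the piece over $[x,b]$ (the middle integral over $[x,a+b-x]$ gets absorbed, since $a+b-x\le b$ when $x\ge(a+b)/2$ and symmetric considerations apply, or one folds $p_2$ into the $[x,b]$ estimate as its supremum is dominated by $\sup_{[x,b]}N$). Combining the three pieces and taking the maximum of the two resulting coefficients $(x-a)\sup_{[a,x]}M(t)$ and $(b-x)\sup_{[x,b]}N(t)$ yields \eqref{eq2.13}.

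The main obstacle is the bookkeeping in the middle term: $p_2(t) = (1-\alpha)\int_{\frac{a+b}{2}}^t g\,du + \alpha\int_x^t g\,du$ for $t\in(x,a+b-x]$ does not obviously reduce to $M(t)$ or $N(t)$ as defined, so one must check that its contribution is genuinely subsumed by the two displayed maxima---this requires the observation that on $(x, a+b-x]$ one has $a+b-x \le b$, and that both $\int_{\frac{a+b}{2}}^t g\,du$ and $\int_x^t g\,du$ are $\mathcal{RS}$-integrals over subintervals of $[x,b]$, so $|p_2(t)| \le N(t)\bigvee_a^b(u)$ after a short argument, with the length factor being $a+b-x-x = b-a-2(x-a) \le b-x$ absorbed into the $(b-x)$ appearing in the bound. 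No sharpness claim is made for this theorem, so the proof ends once all three pieces are assembled. I would close by noting the degenerate but instructive specializations ($g\equiv 1$ recovers a Lipschitz--$f$ companion Ostrowski--trapezoid inequality for $\mathcal{RS}$-integrators), though that belongs in a corollary rather than the proof itself.
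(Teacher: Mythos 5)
Your overall strategy is the same as the paper's: the kernel identity for $K_{g,u}(t;x)$, the Lipschitz estimate $\left|\int_a^b K_{g,u}(t;x)\,df(t)\right|\le L\int_a^b\left|K_{g,u}(t;x)\right|dt$, and pointwise bounds of the kernel by $M(t)$, $N(t)$ times variations of $u$. However, two steps in your write-up do not go through as described. First, the aggregation at the end: once you weaken the pointwise bounds to $\left|p_1(t)\right|\le M(t)\bigvee_a^b(u)$ (and analogously for the other branches), adding the pieces yields a bound of the form $L\left[(x-a)\sup_{[a,x]}M+(b-x)\sup_{[x,b]}N\right]\bigvee_a^b(u)$, and you cannot then ``take the maximum of the two coefficients'': a sum of nonnegative terms is not dominated by their maximum, so this is weaker than, and does not imply, \eqref{eq2.13}. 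The max enters only if you retain the sub-variations until the last step, as the paper does: $A\,\bigvee_a^x(u)+B\,\bigvee_x^b(u)\le\max\{A,B\}\left[\bigvee_a^x(u)+\bigvee_x^b(u)\right]=\max\{A,B\}\bigvee_a^b(u)$.

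Second, the middle branch. You rightly flag that $p_2(t)=(1-\alpha)\int_{(a+b)/2}^t g\,du+\alpha\int_x^t g\,du$ on $(x,a+b-x]$ does not match $M$ or $N$, but the promised ``short argument'' is neither supplied nor routine: for $t>\frac{a+b}{2}$ the first term is controlled by $(1-\alpha)\sup_{s\in[(a+b)/2,\,t]}|g(s)|$, which is at most $(1-\alpha)\sup_{s\in[x,t]}|g(s)|$, whereas $N(t)$ only contains $(1-\alpha)\sup_{s\in[t,b]}|g(s)|$ and $\alpha\sup_{s\in[t,x]}|g(s)|$, neither of which dominates that quantity in general; so the asserted bound $|p_2(t)|\le N(t)\cdot(\text{variation})$ is not established. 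Note that the paper's own proof never confronts this point: its displayed identity and its estimates \eqref{eq2.16}--\eqref{eq2.17} are written for the two-branch kernel ($p$ on $[a,x]$ and $q(t)=(1-\alpha)\int_b^t g\,du+\alpha\int_x^t g\,du$ on $(x,b]$, i.e.\ the rule with $(1-\alpha)f(x)\int_a^b g\,du$), and it simply bounds $L\left[\int_a^x|p(t)|dt+\int_x^b|q(t)|dt\right]$. If you insist on the companion (three-branch) kernel that matches the statement of \eqref{eq2.13}, you must either supply a genuine estimate for $p_2$ or adjust the argument; as written, your proof has a gap at both of these points.
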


\begin{proof}
By Theorem \ref{thm1}, we have the identity
\begin{align*}
&\alpha \left[ {f\left( a \right)\int_a^x {g\left( s
\right)du\left( s \right)}  + f\left( b \right)\int_x^b {g\left( s
\right)du\left( s \right)} } \right]+ \left( {1 - \alpha }
\right)f\left( x \right)\int_a^b {g\left( s \right)du\left( s
\right)}
\\
&\qquad  - \int_a^b {f\left( t \right)g\left( t \right)du\left( t
\right)}
\\
&=\int_a^b {K_{g,u} \left( {t;x} \right)df\left( t \right)}.
\end{align*}
Using the fact that for a Riemann integrable function $p:[c,d] \to
\mathbb{R}$ and $L$-Lipschitzian function $\nu:[c,d] \to
\mathbb{R}$, the inequality one has the inequality
\begin{align}
\label{keyeq2}\left| {\int_c^d {p\left( t \right)d\nu\left( t
\right)} } \right| \le L \int_c^d {\left| {p\left( t \right)}
\right|dt}.
\end{align}
As $f$ is $L$--Lipschitz mapping on $[a,b]$, by (\ref{keyeq2}) we
have
\begin{align}
\label{eq2.15}\left| {\int_a^b {K_{g,u} \left( {t;x}
\right)df\left( t \right)}}\right| &\le L \int_a^b {\left|
{K_{g,u} \left( {t;x} \right)}\right|dt}
\nonumber\\
&= L \left[ {\int_a^x {\left| {p\left( {t} \right)}\right|dt} +
\int_x^b {\left| {q\left( {t} \right)}\right|dt}}\right]
\end{align}
However, as $u$ is of bounded variation on $[a,b]$ and $g$ is
continuous, by (\ref{keyeq1}) we have
\begin{align}
\label{eq2.16}\left| {p\left( t \right)} \right| &\le \left( {1 -
\alpha } \right)\left| {\int_a^t {g\left( s \right)du\left( s
\right)} } \right| + \alpha \left| {\int_x^t {g\left( s
\right)du\left( s \right)} } \right|
\nonumber\\
&\le \left( {1 - \alpha } \right)\mathop {\sup }\limits_{s \in
\left[ {a,t} \right]} \left| {g\left( s \right)} \right| \cdot
\bigvee_a^t \left( u \right) + \alpha \mathop {\sup }\limits_{s
\in \left[ {t,x} \right]} \left| {g\left( s \right)} \right| \cdot
\bigvee_t^x \left( u \right)
\nonumber\\
&\le \max \left\{ {\left( {1 - \alpha } \right)\mathop {\sup
}\limits_{s \in \left[ {a,t} \right]} \left| {g\left( s \right)}
\right|,\alpha \mathop {\sup }\limits_{s \in \left[ {t,x} \right]}
\left| {g\left( s \right)} \right|} \right\}\cdot \bigvee_a^x
\left( u \right)
\nonumber\\
&:= M\left( t \right) \cdot \bigvee_a^x \left( u \right).
\end{align}
Similarly, we have
\begin{align}
\label{eq2.17}\left| {q\left( t \right)} \right| &\le \max \left\{
{\left( {1 - \alpha } \right)\mathop {\sup }\limits_{s \in \left[
{t,b} \right]} \left| {g\left( s \right)} \right|,\alpha \mathop
{\sup }\limits_{s \in \left[ {t,x} \right]} \left| {g\left( s
\right)} \right|} \right\}\cdot \bigvee_x^b \left( u \right)
\nonumber\\
&:= N\left( t \right) \cdot \bigvee_x^b \left( u \right).
\end{align}
Thus, by (\ref{eq2.15})--(\ref{eq2.17}), we have
\begin{align*}
\left| {\int_a^b {K_{g,u} \left( {t;x} \right)df\left( t
\right)}}\right| &\le L \left[ {\int_a^x {\left| {p\left( {t}
\right)}\right|dt} + \int_x^b {\left| {q\left( {t}
\right)}\right|dt}}\right]
\\
&\le L \left[ {\left( {\int_a^x {M\left( t \right)dt} } \right)
\cdot \bigvee_a^x \left( u \right) + \left( {\int_x^b {N\left( t
\right)dt} } \right) \cdot \bigvee_x^b \left( u \right)}\right]
\\
&\le L \left[ {\left( {x - a} \right) \cdot\mathop {\sup
}\limits_{t \in \left[ {a,x} \right]} \left\{ {M\left( t \right)}
\right\} \cdot \bigvee_a^x \left( u \right) + \left( {b - x}
\right) \cdot\mathop {\sup }\limits_{t \in \left[ {x,b} \right]}
\left\{ {N\left( t \right)} \right\} \cdot \bigvee_x^b \left( u
\right)}\right]
\\
&\le L \cdot \max \left\{ {\left( {x - a} \right) \cdot\mathop
{\sup }\limits_{t \in \left[ {a,x} \right]} \left\{ {M\left( t
\right)} \right\},\left( {b - x} \right) \cdot\mathop {\sup
}\limits_{t \in \left[ {x,b} \right]} \left\{ {N\left( t \right)}
\right\}} \right\} \cdot \bigvee_a^b \left( u \right),
\end{align*}
which gives the result.
\end{proof}

\begin{remark}
In Theorem \ref{thm2}, if $g\left( t \right) = 1$ for all $t \in
[a,b]$. Then $M\left( t \right) = N\left( t \right) = \left[
{\frac{1}{2} + \left| {\frac{1}{2} - \alpha } \right|} \right]$,
for all $t \in [a,b]$.
\end{remark}

\begin{corollary}
\label{cor2}In Theorem \ref{thm2}, let $g\left( t \right) = 1$ for
all $t \in [a,b]$. Then, we have the inequality
\begin{multline}
\label{eq3.6}\left| {\alpha \left[ {\left( {u\left( x \right) -
u\left( a \right)} \right)f\left( a \right) + \left( {u\left( b
\right) - u\left( x \right)} \right)f\left( b \right)} \right] + \left( {1 - \alpha } \right)\left\{ {\left[ {u\left( {\frac{{a + b}}{2}} \right) - u\left( a \right)} \right]f\left( x \right)} \right.}
\right.
\\
\left. {+ \left. {\left[ {u\left( b \right) - u\left( {\frac{{a + b}}{2}} \right)} \right]f\left( {a + b - x} \right)} \right\}
- \int_a^b {f\left( t
\right)du\left( t \right)}}\right|
\\
\le L\left[ {\frac{1}{2} + \left| {\frac{1}{2} - \alpha } \right|}
\right] \cdot \left[ {\frac{{b - a}}{2} + \left| {x - \frac{{a +
b}}{2}} \right|} \right] \cdot \bigvee_a^b \left( u \right).
\end{multline}
The constant $\left[ {\frac{1}{2} + \left| {\frac{1}{2} - \alpha }
\right|} \right]$ is the best possible.

For instance,
\begin{itemize}
\item If $\alpha = 0$, then we get
\begin{multline}
\left| {\left\{ {\left[ {u\left( {\frac{{a + b}}{2}} \right) - u\left( a \right)} \right]f\left( x \right)
+ \left[ {u\left( b \right) - u\left( {\frac{{a + b}}{2}} \right)} \right]f\left( {a + b - x} \right)} \right\}
- \int_a^b {f\left( t
\right)du\left( t \right)}}\right|
\\
\le L \left[ {\frac{{b - a}}{2} + \left| {x - \frac{{a + b}}{2}}
\right|} \right] \cdot \bigvee_a^b \left( u \right).
\end{multline}

\item If $\alpha = \frac{1}{3}$, then we get
\begin{multline}
\label{eq3.8}\left| {\frac{1}{3} \left[ {\left( {u\left( x \right) -
u\left( a \right)} \right)f\left( a \right) + \left( {u\left( b
\right) - u\left( x \right)} \right)f\left( b \right)} \right] + \frac{2}{3}\left\{ {\left[ {u\left( {\frac{{a + b}}{2}} \right) - u\left( a \right)} \right]f\left( x \right)} \right.}
\right.
\\
\left. {+ \left. {\left[ {u\left( b \right) - u\left( {\frac{{a + b}}{2}} \right)} \right]f\left( {a + b - x} \right)} \right\}
- \int_a^b {f\left( t
\right)du\left( t \right)}}\right|
\\
\le \frac{2}{3}L \left[ {\frac{{b - a}}{2} + \left| {x - \frac{{a
+ b}}{2}} \right|} \right] \cdot \bigvee_a^b \left( u \right).
\end{multline}

\item If $\alpha = \frac{1}{2}$, then we get
\begin{multline}
\left| {\frac{1}{2}\left\{ {\left( {u\left( x \right) -
u\left( a \right)} \right)f\left( a \right) + \left( {u\left( b
\right) - u\left( x \right)} \right)f\left( b \right)+ \left[ {u\left( {\frac{{a + b}}{2}} \right) - u\left( a \right)} \right]f\left( x \right)} \right.}
\right.
\\
\left. {+ \left. {\left[ {u\left( b \right) - u\left( {\frac{{a + b}}{2}} \right)} \right]f\left( {a + b - x} \right)} \right\}
- \int_a^b {f\left( t
\right)du\left( t \right)}}\right|
\\
\le \frac{1}{2}L\left[ {\frac{{b - a}}{2} + \left| {x - \frac{{a +
b}}{2}} \right|} \right] \cdot \bigvee_a^b \left( u \right).
\end{multline}

\item If $\alpha = 1$, then we get
\begin{multline}
\left| {\left[ {u\left( x \right) - u\left( a \right)}
\right]f\left( a \right) + \left[ {u\left( b \right) - u\left( x
\right)} \right]f\left( b \right)- \int_a^b {f\left( t
\right)du\left( t \right)}}\right|
\\
\le L \left[ {\frac{{b - a}}{2} + \left| {x - \frac{{a + b}}{2}}
\right|} \right] \cdot \bigvee_a^b \left( u \right).
\end{multline}
\end{itemize}
\end{corollary}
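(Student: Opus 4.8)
The plan is to read the inequality (\ref{eq3.6}) straight off Theorem \ref{thm2} by setting $g\equiv1$, and then to establish optimality of the constant by a dualized version of the sharpness argument in Corollary \ref{cor1}.

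First I would specialize Theorem \ref{thm2} to the weight $g(t)=1$. For every subinterval $I\subseteq[a,b]$ one has $\sup_{s\in I}|g(s)|=1$, so $M(t)=N(t)=\max\{1-\alpha,\alpha\}=\frac{1}{2}+\left|\frac{1}{2}-\alpha\right|$ for all $t$, and therefore $\sup_{t\in[a,x]}M(t)=\sup_{t\in[x,b]}N(t)=\frac{1}{2}+\left|\frac{1}{2}-\alpha\right|$; this is exactly the content of the Remark immediately preceding the corollary. Hence the right-hand side of (\ref{eq2.13}) becomes $L\left[\frac{1}{2}+\left|\frac{1}{2}-\alpha\right|\right]\max\{x-a,\,b-x\}\,\bigvee_a^b(u)$, and the elementary identity $\max\{x-a,b-x\}=\frac{b-a}{2}+\left|x-\frac{a+b}{2}\right|$ turns this into the asserted bound. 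On the left-hand side, $g\equiv1$ reduces every weighted Stieltjes integral to a difference of values of $u$ (for instance $\int_a^{(a+b)/2}g\,du=u(\frac{a+b}{2})-u(a)$, $\int_a^{x}g\,du=u(x)-u(a)$, and $\int_a^b fg\,du=\int_a^b f\,du$), which reproduces the left-hand side of (\ref{eq3.6}). The four displayed special cases then follow by inserting $\alpha\in\{0,\frac{1}{3},\frac{1}{2},1\}$, using that $\frac{1}{2}+\left|\frac{1}{2}-\alpha\right|$ equals $1$, $\frac{2}{3}$, $\frac{1}{2}$, $1$ respectively and that the summand carrying a zero weight drops out when $\alpha=0$ or $\alpha=1$.

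The step with actual content is optimality of $\frac{1}{2}+\left|\frac{1}{2}-\alpha\right|$. Mirroring the sharpness proof of Corollary \ref{cor1} but with the roles of the two functions interchanged, I would, for each fixed $\alpha$, produce an extremal pair $(f,u)$ --- now $f$ a simple Lipschitz function and $u$ a step function of bounded variation --- and test (\ref{eq3.6}) at $x=\frac{a+b}{2}$, where $\left|x-\frac{a+b}{2}\right|=0$ and the bound collapses to $LC\cdot\frac{b-a}{2}\cdot\bigvee_a^b(u)$ with $C$ the putative constant. For $0\le\alpha\le\frac{1}{2}$ take $f(t)=t$ (so $L=1$) and $u$ given by $u(a)=0$, $u(t)=1$ for $t\in(a,b]$; then $\bigvee_a^b(u)=1$ and the single jump of $u$ at $a$ yields $\int_a^b f\,du=f(a)=a$, so the left-hand side of (\ref{eq3.6}) evaluates to $(1-\alpha)\frac{b-a}{2}$, forcing $C\ge1-\alpha$. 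For $\frac{1}{2}\le\alpha\le1$ keep $f(t)=t$ but take $u\equiv0$ on $[a,\frac{a+b}{2})$ and $u\equiv1$ on $[\frac{a+b}{2},b]$; then $\bigvee_a^b(u)=1$, $\int_a^b f\,du=f(\frac{a+b}{2})=\frac{a+b}{2}$, and the left-hand side of (\ref{eq3.6}) evaluates to $\alpha\frac{b-a}{2}$, forcing $C\ge\alpha$. Combining the two cases gives $C\ge\frac{1}{2}+\left|\frac{1}{2}-\alpha\right|$, with equality attained in these examples, so the constant is best possible; the listed particular cases inherit sharpness from the same pairs.

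I expect the only mildly delicate point to be the evaluation of the Riemann--Stieltjes integral of a continuous $f$ against a jump integrator (in particular the treatment of the jump placed at the endpoint $a$ in the first extremal pair); all the rest is routine specialization of Theorem \ref{thm2} and bookkeeping of the resulting boundary terms.
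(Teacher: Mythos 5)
Your proposal is correct and follows essentially the same route as the paper: specialize Theorem \ref{thm2} with $g\equiv 1$ (so $M(t)=N(t)=\frac{1}{2}+\left|\frac{1}{2}-\alpha\right|$ and $\max\{x-a,b-x\}=\frac{b-a}{2}+\left|x-\frac{a+b}{2}\right|$), then prove sharpness by testing at $x=\frac{a+b}{2}$ with a $1$-Lipschitz linear $f$ and a unit-variation step integrator $u$, treating $0\le\alpha\le\frac{1}{2}$ and $\frac{1}{2}\le\alpha\le 1$ separately to get $C\ge 1-\alpha$ and $C\ge\alpha$. The paper's extremal pairs differ only cosmetically from yours ($f(t)=t-b$ with $u$ jumping at $b$, and $f(t)=t-a$ with $u$ a spike of height $\frac{1}{2}$ at the midpoint), and your evaluations of the jump integrals are correct.
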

\begin{proof}
The results follow by Theorem \ref{thm2}. It remains to prove the
sharpness of (\ref{eq3.6}). Suppose $0 \le \alpha \le
\frac{1}{2}$, assume that (\ref{eq3.6}) holds with constant
$C_1>0$, i.e.,
\begin{multline}
\label{eqKKK}\left| {\alpha \left[ {\left( {u\left( x \right) -
u\left( a \right)} \right)f\left( a \right) + \left( {u\left( b
\right) - u\left( x \right)} \right)f\left( b \right)} \right] + \left( {1 - \alpha } \right)\left\{ {\left[ {u\left( {\frac{{a + b}}{2}} \right) - u\left( a \right)} \right]f\left( x \right)} \right.}
\right.
\\
\left. {+ \left. {\left[ {u\left( b \right) - u\left( {\frac{{a + b}}{2}} \right)} \right]f\left( {a + b - x} \right)} \right\}
- \int_a^b {f\left( t
\right)du\left( t \right)}}\right|
\\
\le L C_1 \left[ {\frac{{b - a}}{2} + \left| {x - \frac{{a +
b}}{2}} \right|} \right] \cdot \bigvee_a^b \left( u \right).
\end{multline}
Let $f,u:[a,b] \to \mathbb{R}$ be defined as follows $f\left( t
\right) = t -b$ and
\begin{align*}
u\left( t \right) = \left\{
\begin{array}{l}
 0,\,\,\,\,\,\,\,\,t \in \left[ {a,b} \right)\\
  \\
 1,\,\,\,\,\,\,\,t = b \\
 \end{array} \right.,
\end{align*}
Therefore, $f$ is $L$--Lipschitz with $L=1$ and $\bigvee_a^b\left(
u \right) = 1$ and $\int_a^b {f\left( t \right)du\left( t \right)}
= 0$, setting $x = \frac{a+b}{2}$ it gives by (\ref{eqKKK})
\begin{align*}
\left( {1 - \alpha } \right)\frac{\left( {b-a} \right)}{2} \le
C_1\frac{\left( {b-a} \right)}{2}.
\end{align*}
which proves that $C_1 \ge 1 - \alpha$, and therefore $1- \alpha$
is the best possible for all $0 \le \alpha \le \frac{1}{2}$.

Now, suppose $\frac{1}{2} \le \alpha \le 1$ and assume that
(\ref{eq3.6}) holds with constant $C_2>0$, i.e.,
\begin{multline}
\label{eqKKKK}\left| {\alpha \left[ {\left( {u\left( x \right) -
u\left( a \right)} \right)f\left( a \right) + \left( {u\left( b
\right) - u\left( x \right)} \right)f\left( b \right)} \right] + \left( {1 - \alpha } \right)\left\{ {\left[ {u\left( {\frac{{a + b}}{2}} \right) - u\left( a \right)} \right]f\left( x \right)} \right.}
\right.
\\
\left. {+ \left. {\left[ {u\left( b \right) - u\left( {\frac{{a + b}}{2}} \right)} \right]f\left( {a + b - x} \right)} \right\}
- \int_a^b {f\left( t
\right)du\left( t \right)}}\right|
\\
\le  L C_2 \left[ {\frac{{b - a}}{2} + \left| {x - \frac{{a +
b}}{2}} \right|} \right] \cdot \bigvee_a^b \left( u \right).
\end{multline}
Let $f,u:[a,b] \to \mathbb{R}$ be defined as follows $f\left( t
\right) = t - a$ and
\begin{align*}
u\left( t \right) = \left\{
\begin{array}{l}
 0,\,\,\,\,\,\,\,\,t \in \left[ {a,b} \right]\backslash \left\{ {\frac{a+b}{2}} \right\} \\
  \\
 \frac{1}{2},\,\,\,\,\,\,\,\,t = \frac{a+b}{2} \\
 \end{array} \right..
\end{align*}
Therefore, $f$ is $L$--Lipschitz with $L=1$ and $\bigvee_a^b\left(
u \right) = 1$ and $\int_a^b {f\left( t \right)du\left( t \right)}
= 0$, setting $x = \frac{a+b}{2}$ it gives by (\ref{eqKKKK})
\begin{align*}
\alpha \frac{\left( {b-a} \right)}{2} \le C_2\frac{\left( {b-a}
\right)}{2}.
\end{align*}
which proves that $C_2 \ge \alpha$, and therefore $\alpha$ is the
best possible for all $\frac{1}{2} \le \alpha \le 1$.
Consequently, we can conclude that the constant $\left[
{\frac{1}{2} + \left| {\frac{1}{2} - \alpha } \right|} \right]$ is
the best possible, for all $\alpha\in [0,1]$.
\end{proof}

\begin{corollary}
In (\ref{eq3.8}), choosing $x = \frac{a+b}{2}$, then we have the
following Simpson's type inequality for $\mathcal{RS}$--integrals:
\begin{multline}
\left| {\frac{1}{3}\left\{ {\left[ {u\left( {\frac{a+b}{2}}
\right) - u\left( a \right)} \right]f\left( a \right) + 2 \left[
{u\left( b \right) - u\left( a \right)} \right]f\left(
{\frac{a+b}{2}} \right) } \right.} \right.
\\
\left. {\left. {+ \left[ {u\left( b \right) - u\left(
{\frac{a+b}{2}} \right)} \right]f\left( b \right)} \right\} -
\int_a^b {f\left( t \right)du\left( t \right)}}\right|
\\
\le \frac{1}{3}L \left( {b-a} \right) \cdot \bigvee_a^b \left( u
\right).
\end{multline}
The constant $\frac{1}{3}$ is the best possible.
\end{corollary}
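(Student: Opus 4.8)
The plan is to obtain the inequality simply by specializing the case $\alpha=\tfrac13$ of Corollary~\ref{cor2}, namely \eqref{eq3.8}, to the midpoint $x=\tfrac{a+b}{2}$. With $x=\tfrac{a+b}{2}$ one has $a+b-x=\tfrac{a+b}{2}=x$, hence $f(x)=f(a+b-x)=f\bigl(\tfrac{a+b}{2}\bigr)$, and the two summands carrying the factor $\tfrac23$ in \eqref{eq3.8} collapse by telescoping the $u$-increments:
\[
\bigl[u(\tfrac{a+b}{2})-u(a)\bigr]f\bigl(\tfrac{a+b}{2}\bigr)+\bigl[u(b)-u(\tfrac{a+b}{2})\bigr]f\bigl(\tfrac{a+b}{2}\bigr)=\bigl[u(b)-u(a)\bigr]f\bigl(\tfrac{a+b}{2}\bigr).
\]
Regrouping the remaining $\tfrac13$-term of \eqref{eq3.8} together with this expression produces precisely the three-point (Simpson) combination inside the modulus in the statement. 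On the right-hand side $\bigl|x-\tfrac{a+b}{2}\bigr|=0$, so $\tfrac{b-a}{2}+\bigl|x-\tfrac{a+b}{2}\bigr|=\tfrac{b-a}{2}$ and the bound $\tfrac23 L\bigl[\tfrac{b-a}{2}\bigr]\bigvee_a^b(u)$ reduces to $\tfrac13 L(b-a)\bigvee_a^b(u)$. This already gives the asserted inequality.

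For the best-possibility of $\tfrac13$ I would argue exactly as in the proof of Corollary~\ref{cor2}. Suppose the inequality holds with some constant $C>0$ in place of $\tfrac13$. Test it on $f(t)=t-b$, which is $L$-Lipschitz with $L=1$, and on the step integrator $u$ given by $u(t)=0$ for $t\in[a,b)$ and $u(b)=1$, so that $\bigvee_a^b(u)=1$ and $\int_a^b f(t)\,du(t)=f(b)=0$. Since $u\bigl(\tfrac{a+b}{2}\bigr)=u(a)=0$ and $u(b)=1$, the three-point expression reduces to $\bigl|\tfrac13\cdot 2\,[u(b)-u(a)]\,f\bigl(\tfrac{a+b}{2}\bigr)\bigr|=\tfrac23\bigl|\tfrac{a+b}{2}-b\bigr|=\tfrac{b-a}{3}$, while the right-hand side equals $C(b-a)$. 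Hence $C\ge\tfrac13$, so $\tfrac13$ cannot be improved.

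I do not anticipate a genuine obstacle: everything is a direct specialization of \eqref{eq3.8}, and the only point needing a little care is the extremal pair $(f,u)$ for the sharpness claim, which must be chosen so that simultaneously $\int_a^b f\,du=0$, the two endpoint (trapezoidal) contributions vanish at $x=\tfrac{a+b}{2}$, and the midpoint term attains its maximal size relative to $L(b-a)\bigvee_a^b(u)$. One should also double-check the value of the Riemann--Stieltjes integral for the chosen step function, using the standard treatment of an integrator jump located at an endpoint of $[a,b]$.
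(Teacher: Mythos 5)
Your proposal is correct and matches the paper's route: the inequality is exactly the specialization of \eqref{eq3.8} at $x=\frac{a+b}{2}$ (where $a+b-x=x$ collapses the midpoint terms and the factor $\frac{2}{3}\cdot\frac{b-a}{2}$ gives $\frac{1}{3}(b-a)$), and your sharpness check uses precisely the extremal pair $f(t)=t-b$ with the unit step integrator at $t=b$ that the paper itself employs in the sharpness part of Corollary \ref{cor2}, evaluated at $\alpha=\frac13$. Your computation $\bigl|\tfrac13\cdot 2[u(b)-u(a)]f(\tfrac{a+b}{2})\bigr|=\tfrac{b-a}{3}$ against the bound $C(b-a)$ indeed forces $C\ge\tfrac13$, so nothing is missing.
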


\begin{corollary}
In (\ref{eq3.6}), let $u(t) = t$ for all $t \in [a,b]$, then we
get
\begin{multline}
\left| {\alpha \left( {\left( {x - a} \right)f\left( a \right) +
\left( {b - x} \right)f\left( b \right)} \right) +
\frac{1}{2}\left( {1 - \alpha } \right)\left( {b - a} \right)\left( {f\left( x \right) + f\left( {a + b - x} \right)} \right)
- \int_a^b
{f\left( t \right)dt}}\right|
\\
\le L \left( {b-a} \right) \left[ {\frac{1}{2} + \left|
{\frac{1}{2} - \alpha } \right|} \right] \cdot\left[ {\frac{{b -
a}}{2} + \left| {x - \frac{{a + b}}{2}} \right|} \right].
\end{multline}
For $x = \frac{a+b}{2}$, we have
\begin{multline}
\left| {\left( {b - a} \right) \left[ {\alpha  \frac{f\left( a
\right) + f\left( b \right)}{2} + \left( {1 - \alpha }
\right)f\left( {\frac{a+b}{2}} \right)} \right]- \int_a^b {f\left(
t \right)dt}}\right|
\\
\le L \left[ {\frac{1}{2} + \left| {\frac{1}{2} - \alpha }
\right|} \right] \cdot\frac{\left( {b-a} \right)^2}{2}.
\end{multline}
\end{corollary}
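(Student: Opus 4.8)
The plan is simply to specialize the already-established inequality (\ref{eq3.6}) (equivalently Corollary \ref{cor2}, in which the weight has been fixed to $g\equiv 1$) to the integrator $u(t)=t$ on $[a,b]$. First I would check that the hypotheses of Theorem \ref{thm2} are met by this choice: $u(t)=t$ is monotonic increasing, hence of bounded variation on $[a,b]$ with $\bigvee_a^b(u)=b-a$, while the assumption on $f$ (being $L$-Lipschitzian on $[a,b]$) is carried over unchanged. No condition on $g$ needs rechecking, since Corollary \ref{cor2} has already set $g\equiv 1$.

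Next I would evaluate the elementary quantities entering (\ref{eq3.6}) for $u(t)=t$:
\[
u(x)-u(a)=x-a,\qquad u(b)-u(x)=b-x,\qquad u\Bigl(\tfrac{a+b}{2}\Bigr)-u(a)=u(b)-u\Bigl(\tfrac{a+b}{2}\Bigr)=\tfrac{b-a}{2},
\]
and $\int_a^b f(t)\,du(t)=\int_a^b f(t)\,dt$. Substituting these into the left-hand side of (\ref{eq3.6}), the two middle terms combine to give $\tfrac12(1-\alpha)(b-a)\bigl(f(x)+f(a+b-x)\bigr)$, and replacing $\bigvee_a^b(u)$ by $b-a$ on the right-hand side yields precisely the first displayed inequality of the corollary.

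Finally, for the second display I would put $x=\tfrac{a+b}{2}$ in the inequality just obtained. Then $a+b-x=\tfrac{a+b}{2}=x$, so $f(x)+f(a+b-x)=2f\bigl(\tfrac{a+b}{2}\bigr)$; also $x-a=b-x=\tfrac{b-a}{2}$ and $\bigl|x-\tfrac{a+b}{2}\bigr|=0$. Plugging these in, the left-hand side collapses to $\bigl|(b-a)\bigl[\alpha\tfrac{f(a)+f(b)}{2}+(1-\alpha)f\bigl(\tfrac{a+b}{2}\bigr)\bigr]-\int_a^b f(t)\,dt\bigr|$, and the right-hand side becomes $L\bigl[\tfrac12+\bigl|\tfrac12-\alpha\bigr|\bigr]\cdot\tfrac{(b-a)^2}{2}$, which is the asserted bound. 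The whole argument is routine substitution and bookkeeping; I do not anticipate any genuine obstacle, the only point requiring a moment's care being the verification that $u(t)=t$ legitimately satisfies the bounded-variation hypothesis of Theorem \ref{thm2} with total variation $b-a$.
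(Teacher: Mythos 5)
Your proposal is correct and matches the paper's (implicit) argument: the corollary is obtained exactly by the routine specialization $u(t)=t$ in (\ref{eq3.6}), with $\bigvee_a^b(u)=b-a$, followed by the substitution $x=\frac{a+b}{2}$ for the second display. Your bookkeeping of the terms $u(x)-u(a)=x-a$, $u(b)-u(x)=b-x$, $u\bigl(\tfrac{a+b}{2}\bigr)-u(a)=u(b)-u\bigl(\tfrac{a+b}{2}\bigr)=\tfrac{b-a}{2}$ reproduces the stated bounds exactly, so nothing further is needed.
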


\section{On monotonic nondecreasing integrators}
\begin{theorem}\label{thm3}
Let $g,u$ be as in Theorem \ref{thm2}.  If $f:[a,b] \to
\mathbb{R}$ is monotonic nondecreasing on $[a,b]$, then for any $x
\in [a,b]$ and $\alpha \in [0,1]$, we have
\begin{multline}
\label{eq2.23}\left| {\left( {1 - \alpha } \right)\left[ {f\left( x
\right)\int_a^{{\textstyle{{a + b} \over 2}}} {g\left( s
\right)du\left( s \right)}  + f\left( {a + b - x}
\right)\int_{{\textstyle{{a + b} \over 2}}}^b {g\left( s
\right)du\left( s \right)} } \right]} \right.
\\
\left. {+ \alpha \left[ {f\left( a \right)\int_a^x {g\left( s
\right)du\left( s \right)}  + f\left( b \right)\int_x^b {g\left( s
\right)du\left( s \right)} } \right] - \int_a^b {f\left( t
\right)g\left( t \right)du\left( t \right)}}\right|
\\
\le \mathop {\sup }\limits_{t \in \left[ {a,x} \right]} \left\{
{M\left( t \right)} \right\} \cdot\left[ {f\left( x \right) -
f\left( a \right)} \right] \cdot \bigvee_a^x \left( u \right) +
\mathop {\sup }\limits_{t \in \left[ {x,b} \right]} \left\{
{N\left( t \right)} \right\} \cdot \left[ {f\left( b \right) -
f\left( x \right)} \right] \cdot \bigvee_x^b \left( u \right)
\end{multline}
where, $M\left( t \right)$ and $N\left( t \right)$ are defined in
Theorem \ref{thm2}.
\end{theorem}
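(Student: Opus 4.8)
The plan is to run the proof exactly as for Theorem~\ref{thm2}, replacing the Lipschitz estimate (\ref{keyeq2}) by its monotone counterpart. First I would recall, from the computation in the proof of Theorem~\ref{thm1}, the identity that expresses the signed quantity inside the absolute value in (\ref{eq2.23}) as $\int_a^b K_{g,u}(t;x)\,df(t)$; thus the left-hand side of (\ref{eq2.23}) equals $\left|\int_a^b K_{g,u}(t;x)\,df(t)\right|$. Since $g$ is continuous and $u$ is of bounded variation, each branch of $K_{g,u}(\cdot;x)$ is of bounded variation, so the Riemann--Stieltjes integral against the monotonic nondecreasing $f$ is meaningful, and for such an integrand $p$ one has the basic inequality $\left|\int_a^b p(t)\,df(t)\right|\le\int_a^b |p(t)|\,df(t)$, which plays here the role that (\ref{keyeq2}) played in Theorem~\ref{thm2}.

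Next I would split the kernel as in the proof of Theorem~\ref{thm2}, writing $p:=K_{g,u}(\cdot;x)$ on $[a,x]$ and $q:=K_{g,u}(\cdot;x)$ on $[x,b]$, so that
\[
\left|\int_a^b K_{g,u}(t;x)\,df(t)\right|\le \int_a^x |p(t)|\,df(t)+\int_x^b |q(t)|\,df(t).
\]
On each subinterval I bound the integrand by its supremum and use $\int_a^x df(t)=f(x)-f(a)$ and $\int_x^b df(t)=f(b)-f(x)$, obtaining $\int_a^x |p(t)|\,df(t)\le\bigl(\sup_{t\in[a,x]}|p(t)|\bigr)[f(x)-f(a)]$ and likewise for $q$. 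Then I invoke the pointwise estimates (\ref{eq2.16}) and (\ref{eq2.17}) already established in the proof of Theorem~\ref{thm2}, namely $|p(t)|\le M(t)\bigvee_a^x(u)$ and $|q(t)|\le N(t)\bigvee_x^b(u)$, whence $\sup_{t\in[a,x]}|p(t)|\le\sup_{t\in[a,x]}M(t)\cdot\bigvee_a^x(u)$ and $\sup_{t\in[x,b]}|q(t)|\le\sup_{t\in[x,b]}N(t)\cdot\bigvee_x^b(u)$.

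Assembling these bounds yields
\[
\left|\int_a^b K_{g,u}(t;x)\,df(t)\right|\le \sup_{t\in[a,x]}\{M(t)\}\,[f(x)-f(a)]\bigvee_a^x(u)+\sup_{t\in[x,b]}\{N(t)\}\,[f(b)-f(x)]\bigvee_x^b(u),
\]
which is precisely (\ref{eq2.23}). Observe that, unlike in Theorem~\ref{thm2}, the two contributions simply add with no outer $\max$, because for a monotonic nondecreasing $f$ the total variation over $[a,b]$ splits as $[f(x)-f(a)]+[f(b)-f(x)]$ rather than forcing a single constant.

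The one step needing genuine care is the existence of $\int_a^b K_{g,u}(t;x)\,df(t)$ and the validity of $\left|\int p\,df\right|\le\int|p|\,df$ when $f$ is only monotonic (hence possibly discontinuous): since $g$ is continuous, the discontinuities of $t\mapsto\int_a^t g\,du$ are among those of $u$, so if $u$ and $f$ had a common discontinuity point the integral could fail to exist. I would handle this by noting that it is harmless in the principal case (e.g.\ when $u$ is continuous of bounded variation, $K_{g,u}(\cdot;x)$ is continuous and the integral exists for every monotonic $f$), and otherwise by invoking the standard no-common-discontinuity hypothesis, stating it explicitly rather than suppressing it. Everything else is a direct transcription of the argument for Theorem~\ref{thm2}.
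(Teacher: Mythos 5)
Your proposal is correct and follows essentially the same route as the paper: the identity $\int_a^b K_{g,u}(t;x)\,df(t)$, the bound $\left|\int p\,df\right|\le\int|p|\,df$ for monotonic nondecreasing $f$, the split at $x$, the estimates (\ref{eq2.16})--(\ref{eq2.17}), and the final passage to $\sup M$, $\sup N$ with $f(x)-f(a)$ and $f(b)-f(x)$. Your closing remark about existence of the Riemann--Stieltjes integral when $f$ and $u$ share a discontinuity is a point the paper passes over silently, but it does not change the argument.
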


\begin{proof}
Using the identity
\begin{align*}
&\alpha \left[ {f\left( a \right)\int_a^x {g\left( s
\right)du\left( s \right)}  + f\left( b \right)\int_x^b {g\left( s
\right)du\left( s \right)} } \right]+ \left( {1 - \alpha }
\right)f\left( x \right)\int_a^b {g\left( s \right)du\left( s
\right)}
\\
&\qquad  - \int_a^b {f\left( t \right)g\left( t \right)du\left( t
\right)}
\\
&=\int_a^b {K_{g,u} \left( {t;x} \right)df\left( t \right)}.
\end{align*}
It is well-known that for a monotonic non-decreasing function
$\nu:[a,b] \to \mathbb{R}$ and continuous function $p:[a,b] \to
\mathbb{R}$, one has the inequality
\begin{align}
\label{keyeq3}\left| {\int_a^b {p\left( t \right)d\nu\left( t
\right)} } \right| \le \int_a^b {\left| {p\left( t \right)}
\right|d\nu \left( t \right)}.
\end{align}
As $f$ is monotonic non-decreasing on $[a,b]$, by (\ref{keyeq3})
we have
\begin{align}
\label{eq2.25}\left| {\int_a^b {K_{g,u} \left( {t;x}
\right)df\left( t \right)}}\right| &\le \int_a^b {\left| {K_{g,u}
\left( {t;x} \right)}\right|df\left( t \right)}
\nonumber\\
&=  \int_a^x {\left| {p\left( {t} \right)}\right|df\left( t
\right)} + \int_x^b {\left| {q\left( {t} \right)}\right|df\left( t
\right)}
\end{align}
Now, as $u$ is of bounded variation on $[a,b]$ and $g$ is
continuous, by (\ref{eq2.16})--(\ref{eq2.17}) we have
\begin{align}
\label{eq2.26}\left| {p\left( t \right)} \right| \le  M\left( t
\right) \cdot \bigvee_a^x \left( u
\right),\,\,\,\,\,\,\,\,\,\,\,\,\,\,\,\left| {q\left( t \right)}
\right| \le  N\left( t \right) \cdot \bigvee_x^b \left( u \right)
\end{align}
Thus, by (\ref{eq2.25}) and (\ref{eq2.26}), we have
\begin{align*}
&\left| {\int_a^b {K_{g,u} \left( {t;x} \right)df\left( t
\right)}}\right|
\\
&\le \int_a^x {\left| {p\left( {t} \right)}\right|df\left( t
\right)} + \int_x^b {\left| {q\left( {t} \right)}\right|df\left( t
\right)}
\\
&\le \left( {\int_a^x {M\left( t \right)df\left( t \right)} }
\right) \cdot \bigvee_a^x \left( u \right) + \left( {\int_x^b
{N\left( t \right)df\left( t \right)} } \right) \cdot \bigvee_x^b
\left( u \right)
\\
&\le \mathop {\sup }\limits_{t \in \left[ {a,x} \right]} \left\{
{M\left( t \right)} \right\} \cdot\left[ {f\left( x \right) -
f\left( a \right)} \right] \cdot \bigvee_a^x \left( u \right) +
\mathop {\sup }\limits_{t \in \left[ {x,b} \right]} \left\{
{N\left( t \right)} \right\} \cdot \left[ {f\left( b \right) -
f\left( x \right)} \right] \cdot \bigvee_x^b \left( u \right)
\end{align*}
which gives the result.
\end{proof}

\begin{corollary}
\label{cor3}In Theorem \ref{thm3}, let $g\left( t \right) = 1$ for
all $t \in [a,b]$. Then, we have the inequality
\begin{multline}
\label{eq4.5}\left| {\alpha \left[ {\left( {u\left( x \right) -
u\left( a \right)} \right)f\left( a \right) + \left( {u\left( b
\right) - u\left( x \right)} \right)f\left( b \right)} \right] + \left( {1 - \alpha } \right)\left\{ {\left[ {u\left( {\frac{{a + b}}{2}} \right) - u\left( a \right)} \right]f\left( x \right)} \right.}
\right.
\\
\left. {+ \left. {\left[ {u\left( b \right) - u\left( {\frac{{a + b}}{2}} \right)} \right]f\left( {a + b - x} \right)} \right\}
- \int_a^b {f\left( t
\right)du\left( t \right)}}\right|
\\
\le \left[ {\frac{1}{2} + \left| {\frac{1}{2} - \alpha } \right|}
\right] \cdot \left\{ {\left[ {f\left( x \right) - f\left( a
\right)} \right] \cdot \bigvee_a^x \left( u \right) + \left[
{f\left( b \right) - f\left( x \right)} \right] \cdot \bigvee_x^b
\left( u \right)}\right\}
\\
\le \left[ {\frac{1}{2} + \left| {\frac{1}{2} - \alpha } \right|}
\right] \cdot \left[ {\frac{{f\left( b \right) - f\left( a
\right)}}{2} + \left| {f\left( x \right) - \frac{{f\left( a
\right) + f\left( b \right)}}{2}} \right|} \right] \cdot
\bigvee_a^b \left( u \right).
\end{multline}
For the last inequality, the constant $\left[ {\frac{1}{2} +
\left| {\frac{1}{2} - \alpha } \right|} \right]$ is the best
possible.

For instance,
\begin{itemize}
\item If $\alpha = 0$, then we get
\begin{multline}
\left| {\left\{ {\left[ {u\left( {\frac{{a + b}}{2}} \right) - u\left( a \right)} \right]f\left( x \right)
+ \left[ {u\left( b \right) - u\left( {\frac{{a + b}}{2}} \right)} \right]f\left( {a + b - x} \right)} \right\}
- \int_a^b {f\left( t
\right)du\left( t \right)}}\right|
\\
\le \left[ {f\left( x \right) - f\left( a \right)} \right] \cdot
\bigvee_a^x \left( u \right) + \left[ {f\left( b \right) - f\left(
x \right)} \right] \cdot \bigvee_x^b \left( u \right)
\\
\le \left[ {\frac{{f\left( b \right) - f\left( a \right)}}{2} +
\left| {f\left( x \right) - \frac{{f\left( a \right) + f\left( b
\right)}}{2}} \right|} \right] \cdot \bigvee_a^b \left( u \right).
\end{multline}

\item If $\alpha = \frac{1}{3}$, then we get
\begin{multline}
\label{eq4.7}\left| {\frac{1}{3} \left[ {\left( {u\left( x \right) -
u\left( a \right)} \right)f\left( a \right) + \left( {u\left( b
\right) - u\left( x \right)} \right)f\left( b \right)} \right] + \frac{2}{3}\left\{ {\left[ {u\left( {\frac{{a + b}}{2}} \right) - u\left( a \right)} \right]f\left( x \right)} \right.}
\right.
\\
\left. {+ \left. {\left[ {u\left( b \right) - u\left( {\frac{{a + b}}{2}} \right)} \right]f\left( {a + b - x} \right)} \right\}
- \int_a^b {f\left( t
\right)du\left( t \right)}}\right|
\\
\le \frac{2}{3}\left\{ { \left[ {f\left( x \right) - f\left( a
\right)} \right] \cdot \bigvee_a^x \left( u \right) + \left[
{f\left( b \right) - f\left( x \right)} \right] \cdot \bigvee_x^b
\left( u \right)}\right\}
\\
\le \frac{2}{3} \left[ {\frac{{f\left( b \right) - f\left( a
\right)}}{2} + \left| {f\left( x \right) - \frac{{f\left( a
\right) + f\left( b \right)}}{2}} \right|} \right] \cdot
\bigvee_a^b \left( u \right).
\end{multline}

\item If $\alpha = \frac{1}{2}$, then we get
\begin{multline}
\left| {\frac{1}{2}\left\{ {\left( {u\left( x \right) -
u\left( a \right)} \right)f\left( a \right) + \left( {u\left( b
\right) - u\left( x \right)} \right)f\left( b \right)+ \left[ {u\left( {\frac{{a + b}}{2}} \right) - u\left( a \right)} \right]f\left( x \right)} \right.}
\right.
\\
\left. {+ \left. {\left[ {u\left( b \right) - u\left( {\frac{{a + b}}{2}} \right)} \right]f\left( {a + b - x} \right)} \right\}
- \int_a^b {f\left( t
\right)du\left( t \right)}}\right|
\\
\le \frac{1}{2}\left\{ { \left[ {f\left( x \right) - f\left( a
\right)} \right] \cdot \bigvee_a^x \left( u \right) + \left[
{f\left( b \right) - f\left( x \right)} \right] \cdot \bigvee_x^b
\left( u \right)}\right\}
\\
\le \frac{1}{2} \left[ {\frac{{f\left( b \right) - f\left( a
\right)}}{2} + \left| {f\left( x \right) - \frac{{f\left( a
\right) + f\left( b \right)}}{2}} \right|} \right] \cdot
\bigvee_a^b \left( u \right).
\end{multline}

\item If $\alpha = 1$, then we get
\begin{multline}
\left| {\left[ {u\left( x \right) - u\left( a \right)}
\right]f\left( a \right) + \left[ {u\left( b \right) - u\left( x
\right)} \right]f\left( b \right)- \int_a^b {f\left( t
\right)du\left( t \right)}}\right|
\\
\le \left[ {f\left( x \right) - f\left( a \right)} \right] \cdot
\bigvee_a^x \left( u \right) + \left[ {f\left( b \right) - f\left(
x \right)} \right] \cdot \bigvee_x^b \left( u \right)
\\
\le \left[ {\frac{{f\left( b \right) - f\left( a \right)}}{2} +
\left| {f\left( x \right) - \frac{{f\left( a \right) + f\left( b
\right)}}{2}} \right|} \right] \cdot \bigvee_a^b \left( u \right).
\end{multline}
\end{itemize}
\end{corollary}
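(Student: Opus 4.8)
The plan is to specialize Theorem \ref{thm3} to the weight $g\equiv 1$ and then to make two elementary simplifications: one of the mappings $M(t),N(t)$, and one of the variation terms.

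First I would put $g(t)=1$ for all $t\in[a,b]$ in Theorem \ref{thm3}. Since $\sup_{s\in[a,t]}|g(s)|=\sup_{s\in[t,x]}|g(s)|=\sup_{s\in[t,b]}|g(s)|=1$ for every admissible $t$, the mappings $M$ and $N$ from Theorem \ref{thm2} collapse to the constant $M(t)=N(t)=\max\{1-\alpha,\alpha\}=\frac{1}{2}+\left|\frac{1}{2}-\alpha\right|$, so that $\sup_{t\in[a,x]}M(t)=\sup_{t\in[x,b]}N(t)=\frac{1}{2}+\left|\frac{1}{2}-\alpha\right|$. At the same time all the weighted Stieltjes integrals reduce to differences of values of $u$, e.g. $\int_a^{(a+b)/2}g\,du=u\!\left(\frac{a+b}{2}\right)-u(a)$ and $\int_a^x g\,du=u(x)-u(a)$, which turns the left-hand side of (\ref{eq2.23}) into precisely the expression on the left of (\ref{eq4.5}). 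Substituting into (\ref{eq2.23}) then gives the first inequality of (\ref{eq4.5}).

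For the second, coarser inequality I would use that $f$ is monotonic nondecreasing, so $f(x)-f(a)\ge 0$ and $f(b)-f(x)\ge 0$; combined with $\bigvee_a^x(u),\bigvee_x^b(u)\le\bigvee_a^b(u)$ and the additivity $\bigvee_a^x(u)+\bigvee_x^b(u)=\bigvee_a^b(u)$, bounding each increment of $f$ by the larger of the two yields $[f(x)-f(a)]\bigvee_a^x(u)+[f(b)-f(x)]\bigvee_x^b(u)\le\max\{f(x)-f(a),\,f(b)-f(x)\}\cdot\bigvee_a^b(u)$. The elementary identity $\max\{A,B\}=\frac{A+B}{2}+\frac{|A-B|}{2}$ with $A=f(x)-f(a)$ and $B=f(b)-f(x)$ rewrites the maximum on the right as $\frac{f(b)-f(a)}{2}+\left|f(x)-\frac{f(a)+f(b)}{2}\right|$, which is the bracket in (\ref{eq4.5}). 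The displayed special cases $\alpha\in\{0,\frac{1}{3},\frac{1}{2},1\}$ are then immediate substitutions.

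It remains to prove the sharpness of the final constant $\frac{1}{2}+\left|\frac{1}{2}-\alpha\right|$, and here I would reuse the argument from Corollary \ref{cor2} almost verbatim. The extremal integrands constructed there---$f(t)=t-b$ together with the unit jump $u\equiv 0$ on $[a,b)$, $u(b)=1$, in the range $0\le\alpha\le\frac{1}{2}$, and $f(t)=t-a$ together with the spike $u\equiv 0$ on $[a,b]\setminus\{\frac{a+b}{2}\}$, $u(\frac{a+b}{2})=\frac{1}{2}$, in the range $\frac{1}{2}\le\alpha\le 1$---have $f$ monotonic nondecreasing, hence admissible for Theorem \ref{thm3}. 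Choosing $x=\frac{a+b}{2}$ one checks, exactly as in the proof of Corollary \ref{cor2}, that $\int_a^b f\,du=0$, $\bigvee_a^b(u)=1$, and $\frac{f(b)-f(a)}{2}+\left|f(x)-\frac{f(a)+f(b)}{2}\right|=\frac{b-a}{2}$, so that any admissible constant $C$ in (\ref{eq4.5}) must satisfy $C\ge 1-\alpha$ for $0\le\alpha\le\frac{1}{2}$ and $C\ge\alpha$ for $\frac{1}{2}\le\alpha\le 1$, i.e. $C\ge\frac{1}{2}+\left|\frac{1}{2}-\alpha\right|$. The only delicate point is the correct evaluation of the Riemann--Stieltjes integral $\int_a^b f\,du$ against these degenerate step integrators (and the one-sided behaviour of $u$ at the endpoint, resp. at the spike); since this is the very computation already carried out for Corollary \ref{cor2}, I expect no real obstacle.
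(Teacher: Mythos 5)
Your derivation of the two inequalities is correct and coincides with the paper's: with $g\equiv 1$ the mappings of Theorem \ref{thm2} become $M(t)=N(t)=\max\{1-\alpha,\alpha\}=\tfrac12+\left|\tfrac12-\alpha\right|$, the weighted Stieltjes integrals collapse to increments of $u$, and the coarser bound follows from $A\bigvee_a^x(u)+B\bigvee_x^b(u)\le\max\{A,B\}\bigvee_a^b(u)$ together with $\max\{A,B\}=\tfrac{A+B}{2}+\tfrac{|A-B|}{2}$ applied to $A=f(x)-f(a)\ge0$, $B=f(b)-f(x)\ge0$. Where you genuinely diverge from the paper is the sharpness argument. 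The paper tests (\ref{eq4.5}) with a two-valued monotone $f$ ($f(a)=-1$, $f=0$ on $(a,b]$), paired first with the unit-jump integrator at $b$ and $x=a$, and then with $u(t)=t$ and $x=b$; you instead transplant the extremal pairs from Corollary \ref{cor2} (the increasing linear functions $f(t)=t-b$, $f(t)=t-a$ with the step, resp.\ spike, integrators) evaluated at $x=\tfrac{a+b}{2}$. Your choices are admissible for Theorem \ref{thm3} ($f$ is nondecreasing, $u$ nonnegative of bounded variation), and the computations do check out: in both cases $\int_a^b f\,du=0$, $\bigvee_a^b(u)=1$, the bracket equals $\tfrac{b-a}{2}$, and the left-hand side equals $(1-\alpha)\tfrac{b-a}{2}$, resp.\ $\alpha\tfrac{b-a}{2}$, giving $C\ge 1-\alpha$ on $[0,\tfrac12]$ and $C\ge\alpha$ on $[\tfrac12,1]$. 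Your route has a concrete advantage: working at the midpoint keeps both tests inside the companion form of (\ref{eq4.5}) exactly as stated, whereas the paper's endpoint choices are more delicate — as printed, its first test ($x=a$) actually makes the left-hand side of (\ref{eqL}) vanish (all coefficients $u(x)-u(a)$, $u(\frac{a+b}{2})-u(a)$ are zero and $f(b)=0$), and its second test is phrased with the non-companion term $(1-\alpha)[u(b)-u(a)]f(x)$ rather than the expression appearing in (\ref{eq4.5}). So your sharpness proof is not only different but self-contained and, if anything, cleaner; the only point you should make explicit in a final write-up is the evaluation $\int_a^b f\,du=0$ for the jump and spike integrators (continuity of $f$ at the jump points is what justifies it), which you already flag.
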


\begin{proof}
The results follow by Theorem \ref{thm3}. It remains to prove the
sharpness of (\ref{eq4.5}). Suppose $0 \le \alpha \le
\frac{1}{2}$, assume that (\ref{eq4.5}) holds with constant
$C_1>0$, i.e.,
\begin{multline}
\label{eqL}\left| {\alpha \left[ {\left( {u\left( x \right) -
u\left( a \right)} \right)f\left( a \right) + \left( {u\left( b
\right) - u\left( x \right)} \right)f\left( b \right)} \right] + \left( {1 - \alpha } \right)\left\{ {\left[ {u\left( {\frac{{a + b}}{2}} \right) - u\left( a \right)} \right]f\left( x \right)} \right.}
\right.
\\
\left. {+ \left. {\left[ {u\left( b \right) - u\left( {\frac{{a + b}}{2}} \right)} \right]f\left( {a + b - x} \right)} \right\}
- \int_a^b {f\left( t
\right)du\left( t \right)}}\right|
\\
\le C_1 \left[ {\frac{{f\left( b \right) - f\left( a \right)}}{2}
+ \left| {f\left( x \right) - \frac{{f\left( a \right) + f\left( b
\right)}}{2}} \right|} \right] \cdot \bigvee_a^b \left( u \right).
\end{multline}
Let $f,u:[a,b] \to \mathbb{R}$ be defined as follows
\begin{align*}
f\left( t \right) = \left\{
\begin{array}{l}
 -1,\,\,\,\,\,\,\,\,t =a\\
 \\
 0,\,\,\,\,\,\,\,\,\,\,\,\,\,t = (a,b] \\
 \end{array} \right.,
\end{align*}
and
\begin{align*}
u\left( t \right) = \left\{
\begin{array}{l}
 0,\,\,\,\,\,\,\,t \in \left[ {a,b} \right)\\
  \\
 1,\,\,\,\,\,\,\,t = b \\
 \end{array} \right.,
\end{align*}
Therefore, $f$ is monotonic nondecreasing on $[a,b]$ and
$\bigvee_a^b\left( u \right) = 1$ and $\int_a^b {f\left( t
\right)du\left( t \right)} = 0$, setting $x = a$ it gives by
(\ref{eqL}) that $1 - \alpha \le C_1 $, and which proves that $1-
\alpha$ is the best possible for all $0 \le \alpha \le
\frac{1}{2}$.

Now, suppose $\frac{1}{2} \le \alpha \le 1$ and assume that
(\ref{eq4.5}) holds with constant $C_2>0$, i.e.,
\begin{multline}
\label{eqLL}\left| {\alpha \left[ {\left( {u\left( x \right) -
u\left( a \right)} \right)f\left( a \right) + \left( {u\left( b
\right) - u\left( x \right)} \right)f\left( b \right)} \right] }
\right.
\\
\left. {+ \left( {1 - \alpha } \right)\left[ {u\left( b \right) -
u\left( a \right)} \right]f\left( x \right)- \int_a^b {f\left( t
\right)du\left( t \right)}}\right|
\\
\le  C_2 \left[ {\frac{{f\left( b \right) - f\left( a \right)}}{2}
+ \left| {f\left( x \right) - \frac{{f\left( a \right) + f\left( b
\right)}}{2}} \right|} \right] \cdot \bigvee_a^b \left( u \right).
\end{multline}
Let $f,u:[a,b] \to \mathbb{R}$ be defined as  $f\left( t \right)$
as above, and $u\left( t \right) = t$, which follows that
$\bigvee_a^b\left( u \right) = b-a$, and $\int_a^b {f\left( t
\right)du\left( t \right)}  = 0$, setting $x = b$ it gives by
(\ref{eqLL}) $\alpha \le C_2$, and therefore $\alpha$ is the best
possible for all $\frac{1}{2} \le \alpha \le 1$. Consequently, we
can conclude that the constant $\left[ {\frac{1}{2} + \left|
{\frac{1}{2} - \alpha } \right|} \right]$ is the best possible,
for all $\alpha\in [0,1]$.
\end{proof}

\begin{corollary}
In (\ref{eq4.7}), choosing $x = \frac{a+b}{2}$, then we have the
following Simpson's type inequality for $\mathcal{RS}$--integrals:
\begin{multline}
\left| {\frac{1}{3}\left\{ {\left[ {u\left( {\frac{a+b}{2}}
\right) - u\left( a \right)} \right]f\left( a \right) + 2 \left[
{u\left( b \right) - u\left( a \right)} \right]f\left(
{\frac{a+b}{2}} \right) } \right.} \right.
\\
\left. {\left. {+ \left[ {u\left( b \right) - u\left(
{\frac{a+b}{2}} \right)} \right]f\left( b \right)} \right\} -
\int_a^b {f\left( t \right)du\left( t \right)}}\right|
\\
\le \frac{2}{3}\left\{ { \left[ {f\left( {\frac{a+b}{2}} \right) -
f\left( a \right)} \right] \cdot \bigvee_a^{{\textstyle{{a + b}
\over 2}}} \left( u \right) + \left[ {f\left( b \right) - f\left(
{\frac{a+b}{2}} \right)} \right] \cdot \bigvee_{{\textstyle{{a +
b} \over 2}}}^b \left( u \right)}\right\}
\\
\le \frac{2}{3} \left[ {\frac{{f\left( b \right) - f\left( a
\right)}}{2} + \left| {f\left( {\frac{a+b}{2}} \right) -
\frac{{f\left( a \right) + f\left( b \right)}}{2}} \right|}
\right] \cdot \bigvee_a^b \left( u \right).
\end{multline}
For the last inequality, the constant $\frac{2}{3}$ is the best
possible.
\end{corollary}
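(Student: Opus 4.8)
The plan is to obtain the whole displayed chain as the special case $x=\frac{a+b}{2}$ of $(\ref{eq4.7})$, and then to verify sharpness of the constant $\frac{2}{3}$ in the last inequality by an explicit extremal pair. First I would put $x=\frac{a+b}{2}$ in $(\ref{eq4.7})$. Since then $a+b-x=\frac{a+b}{2}=x$, we have $f(a+b-x)=f(x)=f\!\left(\frac{a+b}{2}\right)$ and $u(x)=u\!\left(\frac{a+b}{2}\right)$; therefore on the left-hand side of $(\ref{eq4.7})$ the two summands carrying $f(x)$ and $f(a+b-x)$ merge, their $u$-coefficients $\left[u\!\left(\frac{a+b}{2}\right)-u(a)\right]$ and $\left[u(b)-u\!\left(\frac{a+b}{2}\right)\right]$ add up to $u(b)-u(a)$, and the $\frac{2}{3}$-bracket becomes $\frac{2}{3}\left[u(b)-u(a)\right]f\!\left(\frac{a+b}{2}\right)$. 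Grouping everything under the factor $\frac{1}{3}$ reproduces exactly the Simpson-type left-hand side in the statement. On the right-hand side of $(\ref{eq4.7})$ I would merely substitute $f\!\left(\frac{a+b}{2}\right)$ for $f(x)$ and the half-interval variations $\bigvee_a^{(a+b)/2}(u)$, $\bigvee_{(a+b)/2}^b(u)$ for $\bigvee_a^x(u)$, $\bigvee_x^b(u)$; the intermediate bound comes out directly, and the final bound in terms of $\bigvee_a^b(u)$ is then the same $\max$-splitting already carried out in Corollary \ref{cor3}. No fresh estimate is needed here — it is pure substitution.

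For the optimality of $\frac{2}{3}$ in the last inequality I would argue by contradiction, assuming it holds with a constant $C>0$ in place of $\frac{2}{3}$ for every admissible pair, and then test it on
\begin{align*}
f(t)=\begin{cases}0,& t\in\left[a,\tfrac{a+b}{2}\right),\\ 1,& t\in\left[\tfrac{a+b}{2},b\right],\end{cases}\qquad u(t)=\begin{cases}0,& t=a,\\ 1,& t\in(a,b].\end{cases}
\end{align*}
Here $f$ is monotonic nondecreasing on $[a,b]$, $u\ge 0$ is of bounded variation with $\bigvee_a^b(u)=1$, and $g\equiv1$ is continuous and positive, so the hypotheses are met. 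The only point requiring care is the value of $\int_a^b f\,du$: $u$ is discontinuous solely at $a$, where $f$ is locally constant and hence continuous, so the Riemann--Stieltjes integral exists, and its entire mass sits at $a$ where $f(a)=0$, giving $\int_a^b f\,du=0$. With $u(a)=0$, $u\!\left(\frac{a+b}{2}\right)=u(b)=1$ and $f(a)=0$, $f\!\left(\frac{a+b}{2}\right)=f(b)=1$, the left-hand side equals $\left|\tfrac13\bigl(1\cdot0+2\cdot1\cdot1+0\cdot1\bigr)\right|=\tfrac23$, while the bracket on the right equals $\tfrac{1-0}{2}+\left|1-\tfrac{0+1}{2}\right|=1$ and $\bigvee_a^b(u)=1$. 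Hence $\tfrac23\le C$, so $C\ge\tfrac23$ and the constant $\frac{2}{3}$ is best possible.

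I do not expect a genuine obstacle: the forward direction is a mechanical specialization of $(\ref{eq4.7})$, and the only delicate step in the sharpness part is the evaluation of the Riemann--Stieltjes integral in the test example, which is handled by observing that the (single) discontinuity of $u$ is disjoint from that of $f$ and lies at a point where $f$ vanishes.
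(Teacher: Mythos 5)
Your derivation of the displayed chain is exactly what the paper intends: it states the corollary with no separate proof, since putting $x=\frac{a+b}{2}$ in (\ref{eq4.7}) makes the two midpoint terms merge (their coefficients adding to $u(b)-u(a)$) and the right-hand side specializes verbatim, so that part of your argument coincides with the paper's (implicit) one. Where you genuinely add something is the sharpness of $\frac{2}{3}$: the paper asserts it but never proves it for this corollary, and the extremal pairs used in the proof of Corollary \ref{cor3} (taken at $x=a$ and $x=b$) do not transfer to the fixed choice $x=\frac{a+b}{2}$, so an explicit example is indeed needed. Your pair works: $f$ the unit step at $\frac{a+b}{2}$ is nondecreasing, $u$ the unit step at $a$ is nonnegative of bounded variation with $\bigvee_a^b(u)=1$, the Riemann--Stieltjes sums for $\int_a^b f\,du$ reduce for small mesh to $f(\xi_1)$ with $\xi_1$ near $a$, hence the integral exists and equals $0$ (your observation that the discontinuities of $f$ and $u$ are disjoint and that $f$ vanishes near $a$ is the right justification), and the left-hand side evaluates to $\frac{1}{3}\left(1\cdot 0+2\cdot 1\cdot 1+0\cdot 1\right)=\frac{2}{3}$ while the bracket times $\bigvee_a^b(u)$ equals $1$; equality forces $C\ge\frac{2}{3}$. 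So your proposal is correct, and in fact fills a gap the paper leaves open; note also that your example attains equality in the intermediate bound as well, so it simultaneously certifies that the first inequality cannot be improved at $x=\frac{a+b}{2}$.
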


\begin{corollary}
In (\ref{eq4.5}), let $u(t) = t$ for all $t \in [a,b]$, then we
get
\begin{multline}
\left| {\alpha \left( {\left( {x - a} \right)f\left( a \right) +
\left( {b - x} \right)f\left( b \right)} \right) +
\frac{1}{2}\left( {1 - \alpha } \right)\left( {b - a} \right)\left( {f\left( x \right) + f\left( {a + b - x} \right)} \right)
- \int_a^b
{f\left( t \right)dt}}\right|
\\
\le \left[ {\frac{1}{2} + \left| {\frac{1}{2} - \alpha } \right|}
\right] \cdot \left\{ {\left( {x-a} \right) \cdot \left[ {f\left(
x \right) - f\left( a \right)} \right] + \left( {b-x} \right)
\cdot \left[ {f\left( b \right) - f\left( x \right)} \right]
}\right\}
\\
\le \left[ {\frac{1}{2} + \left| {\frac{1}{2} - \alpha } \right|}
\right] \cdot \left[ {\frac{{f\left( b \right) - f\left( a
\right)}}{2} + \left| {f\left( x \right) - \frac{{f\left( a
\right) + f\left( b \right)}}{2}} \right|} \right] \cdot \left(
{b-a} \right).
\end{multline}
For $x = \frac{a+b}{2}$, we have
\begin{align}
&\left| {\left( {b - a} \right) \left[ {\alpha  \frac{f\left( a
\right) + f\left( b \right)}{2} + \left( {1 - \alpha }
\right)f\left( {\frac{a+b}{2}} \right)} \right]- \int_a^b {f\left(
t \right)dt}}\right|
\\
&\le \frac{1}{2}\left( {b-a} \right)\left[ {\frac{1}{2} + \left|
{\frac{1}{2} - \alpha } \right|} \right] \cdot \left[ {f\left( b
\right) - f\left( a \right)} \right]
\nonumber\\
&\le \left[ {\frac{1}{2} + \left| {\frac{1}{2} - \alpha } \right|}
\right] \cdot \left[ {\frac{{f\left( b \right) - f\left( a
\right)}}{2} + \left| {f\left( {\frac{a+b}{2}} \right) -
\frac{{f\left( a \right) + f\left( b \right)}}{2}} \right|}
\right] \cdot \left( {b-a} \right).\nonumber
\end{align}
\end{corollary}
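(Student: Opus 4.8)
The plan is to obtain this corollary as a direct specialization of inequality (\ref{eq4.5}) in Corollary \ref{cor3} (which is Theorem \ref{thm3} in the case $g \equiv 1$) to the particular integrator $u(t)=t$ on $[a,b]$. That integrator is monotonic increasing, $L$-Lipschitz with $L=1$, and of bounded variation, so the hypotheses of Corollary \ref{cor3} are satisfied and no new estimate is required; the argument is purely a substitution followed by a simplification of the constants.

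First I would evaluate every ingredient of (\ref{eq4.5}) at $u(t)=t$: one has $u\!\left(\tfrac{a+b}{2}\right)-u(a)=\tfrac{b-a}{2}=u(b)-u\!\left(\tfrac{a+b}{2}\right)$, $u(x)-u(a)=x-a$, $u(b)-u(x)=b-x$, the Stieltjes integral $\int_a^b f(t)\,du(t)=\int_a^b f(t)\,dt$, and the total variations $\bigvee_a^x(u)=x-a$, $\bigvee_x^b(u)=b-x$, $\bigvee_a^b(u)=b-a$. Substituting these, the term $(1-\alpha)\{[u(\tfrac{a+b}{2})-u(a)]f(x)+[u(b)-u(\tfrac{a+b}{2})]f(a+b-x)\}$ collapses to $\tfrac12(1-\alpha)(b-a)(f(x)+f(a+b-x))$ while the $\alpha$-part becomes $\alpha((x-a)f(a)+(b-x)f(b))$, and the two right-hand members of (\ref{eq4.5}) become, after pulling out the constant $[\tfrac12+|\tfrac12-\alpha|]$, the quantities $(x-a)[f(x)-f(a)]+(b-x)[f(b)-f(x)]$ (from $[f(x)-f(a)]\bigvee_a^x(u)+[f(b)-f(x)]\bigvee_x^b(u)$) and $[\tfrac{f(b)-f(a)}{2}+|f(x)-\tfrac{f(a)+f(b)}{2}|]\,(b-a)$ respectively; this is the first display of the corollary.

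For the second assertion I would then set $x=\tfrac{a+b}{2}$. Then $a+b-x=x$, so $f(x)+f(a+b-x)=2f\!\left(\tfrac{a+b}{2}\right)$, while $x-a=b-x=\tfrac{b-a}{2}$; hence the left-hand side reduces to $\left|(b-a)\!\left[\alpha\,\tfrac{f(a)+f(b)}{2}+(1-\alpha)f\!\left(\tfrac{a+b}{2}\right)\right]-\int_a^b f(t)\,dt\right|$. The middle bound telescopes, $(x-a)[f(x)-f(a)]+(b-x)[f(b)-f(x)]=\tfrac{b-a}{2}\big([f(x)-f(a)]+[f(b)-f(x)]\big)=\tfrac{b-a}{2}[f(b)-f(a)]$, which gives the factor $\tfrac12(b-a)[\tfrac12+|\tfrac12-\alpha|][f(b)-f(a)]$; keeping instead the coarser bound of (\ref{eq4.5}) produces the last member, with the modulus term $|f(\tfrac{a+b}{2})-\tfrac{f(a)+f(b)}{2}|$.

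I do not expect any genuine obstacle: every step is forced by the identities above. The only point needing care is the bookkeeping — correctly assembling the coefficient $\tfrac12(1-\alpha)(b-a)$ from the two equal increments $\tfrac{b-a}{2}$ of $u$ over $[a,\tfrac{a+b}{2}]$ and $[\tfrac{a+b}{2},b]$, and verifying that the middle expression genuinely telescopes (rather than merely being dominated by the coarser one) when $x=\tfrac{a+b}{2}$.
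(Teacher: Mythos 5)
Your proposal is correct and coincides with what the paper does: the corollary is obtained simply by substituting $u(t)=t$ into (\ref{eq4.5}), using $u\left(\tfrac{a+b}{2}\right)-u(a)=u(b)-u\left(\tfrac{a+b}{2}\right)=\tfrac{b-a}{2}$, $\bigvee_a^x(u)=x-a$, $\bigvee_x^b(u)=b-x$, $\bigvee_a^b(u)=b-a$, and then setting $x=\tfrac{a+b}{2}$ with the telescoping $\tfrac{b-a}{2}\left([f(x)-f(a)]+[f(b)-f(x)]\right)=\tfrac{b-a}{2}[f(b)-f(a)]$. The bookkeeping in your substitution matches the stated bounds exactly, so no further argument is needed.
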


\begin{remark}
We give an attention to the interested reader, is that, in
Theorems \ref{thm1}--\ref{thm3}, one may observe various new
inequalities by replacing the assumptions on $u$, e.g. to be of
bounded variation, $L_u$--Lipschitz or monotonic nondecreasing on
$[a,b]$, which therefore gives in some cases the `dual' of the
above obtained inequalities.

It remains to mention that, in  Theorem \ref{thm2}, and according
to the assumptions on $u$ one may observe several estimations for
the functions $p\left( {t} \right)$ and $q\left( {t} \right)$
which therefore gives different functions $M\left( {t} \right)$
and $N\left( {t} \right)$.
\end{remark}

\begin{remark}
In Theorems \ref{thm1}--\ref{thm3}, a different result(s) in terms
of $L_p$ norms may be stated by applying the well--known
H\"{o}lder integral inequality, by noting that
\begin{align*}
\left| {\int_c^d {g\left( s \right)du\left( s \right)} } \right|
\le \sqrt[q]{{u\left( d \right) - u\left( c \right)}} \times
\sqrt[p]{{\int_c^d {\left| {g\left( s \right)} \right|^p du\left(
s \right)} }}.
\end{align*}
where, $p>1$, $\frac{1}{p} + \frac{1}{q} = 1$.
\end{remark}

\begin{remark}
One can point out some results for the Riemann integral of a
product, in terms of $L_1$, $L_p$ and $L_{\infty}$ norms by using
a similar argument considered in \cite{Dragomir2} (see also
\cite{alomari1}--\cite{alomari2}).
\end{remark}

\section{Applications to Ostrowski-generalized trapezoid quadrature formula for $\mathcal{RS}$-integrals}

Let $I_n : a = x_0 < x_1 < \cdots < x_n = b$ be a division of the
interval $[a,b]$. Define the general Riemann--Stieltjes sum

\begin{multline}
\label{eq4.1}S\left( {f,u,I_n ,\xi } \right)= \sum\limits_{i =
0}^{n - 1}
\alpha \left[ {\left( {u\left( {\xi _i } \right) - u\left( {x_i } \right)} \right)f\left( {x_i } \right) + \left( {u\left( {x_{i + 1} } \right) - u\left( {\xi _i } \right)} \right)f\left( {x_{i + 1} } \right)} \right]
\\
+ \left( {1 - \alpha } \right)\left\{ {\left[ {u\left( {\frac{{x_i  + x_{i + 1} }}{2}} \right) - u\left( {x_i } \right)} \right]f\left( {\xi _i } \right) }\right.
\\
+ \left.{\left[ {u\left( {x_{i + 1} } \right) - u\left( {\frac{{x_i  + x_{i + 1} }}{2}} \right)} \right]f\left( {x_i  + x_{i + 1}  - \xi _i } \right)} \right\}
\end{multline}
In the following, we establish an upper bound for the error
approximation of the Riemann-Stieltjes integral $\int_a^b {f\left(
t \right)du\left( t \right)}$ by its Riemann-Stieltjes sum
$S\left( {f,u,I_n ,\xi } \right)$. As a sample we apply the
inequality (\ref{eq2.8}).

\begin{theorem}
\label{thm5} Under the assumptions of Corollary \ref{cor1}, we
have
\begin{align*}
\int_a^b {f\left( t \right)du\left( t \right)}  = S\left( {f,u,I_n
,\xi } \right) + R\left( {f,u,I_n ,\xi } \right)
\end{align*}
where, $S\left( {f,u,I_n ,\xi } \right)$ is given in (\ref{eq4.1})
and the remainder $R\left( {f,u,I_n ,\xi } \right)$ satisfies the
bound
\begin{align}
\left| {R\left( {f,u,I_n ,\xi } \right)} \right| &\le \left[
{\frac{1}{2} + \left| {\frac{1}{2} - \alpha } \right|} \right]
\left[ {u\left( b \right) - u\left( a \right)} \right]\cdot
\bigvee_{a}^{b} \left( f \right).
\end{align}
\end{theorem}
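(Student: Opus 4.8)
The plan is to apply the inequality (\ref{eq2.8}) of Corollary \ref{cor1} locally on each subinterval $[x_i, x_{i+1}]$ of the division $I_n$, with the intermediate point $\xi_i \in [x_i, x_{i+1}]$ playing the role of $x$, and then sum the resulting estimates. First I would use the additivity of the Riemann--Stieltjes integral over the division to write $\int_a^b f\,du = \sum_{i=0}^{n-1} \int_{x_i}^{x_{i+1}} f(t)\,du(t)$, and observe that $S(f,u,I_n,\xi)$ is precisely $\sum_{i=0}^{n-1}$ of the quadrature expression from (\ref{eq2.8}) applied on $[x_i,x_{i+1}]$ with $x=\xi_i$. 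Hence the remainder decomposes as $R(f,u,I_n,\xi) = \sum_{i=0}^{n-1} R_i$, where each $R_i$ is the local error on $[x_i,x_{i+1}]$.

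Next I would invoke (\ref{eq2.8}) on each subinterval to bound
\begin{align*}
\left| R_i \right| \le \left[ \frac{1}{2} + \left| \frac{1}{2} - \alpha \right| \right] \cdot \left[ \frac{u(x_{i+1}) - u(x_i)}{2} + \left| u(\xi_i) - \frac{u(x_i) + u(x_{i+1})}{2} \right| \right] \cdot \bigvee_{x_i}^{x_{i+1}} \left( f \right).
\end{align*}
Using the triangle inequality in the form $\left| R(f,u,I_n,\xi) \right| \le \sum_{i=0}^{n-1} \left| R_i \right|$, the bracket factor $\left[ \tfrac{1}{2} + \left| \tfrac{1}{2}-\alpha \right| \right]$ pulls out of the sum. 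Since $u$ is monotonic increasing (the assumption inherited through Corollary \ref{cor1} from Theorem \ref{thm1}), both $\tfrac{u(x_{i+1})-u(x_i)}{2}$ and $\left| u(\xi_i) - \tfrac{u(x_i)+u(x_{i+1})}{2} \right|$ are nonnegative and in fact each is at most $u(x_{i+1}) - u(x_i)$; more carefully, their sum equals $\max\{u(\xi_i) - u(x_i),\, u(x_{i+1}) - u(\xi_i)\} \le u(x_{i+1}) - u(x_i)$. Therefore
\begin{align*}
\left| R(f,u,I_n,\xi) \right| \le \left[ \frac{1}{2} + \left| \frac{1}{2} - \alpha \right| \right] \sum_{i=0}^{n-1} \left[ u(x_{i+1}) - u(x_i) \right] \cdot \bigvee_{x_i}^{x_{i+1}} \left( f \right).
\end{align*}

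Finally I would finish by two crude but standard bounds: $\bigvee_{x_i}^{x_{i+1}}(f) \le \bigvee_a^b(f)$ for each $i$ (monotonicity of the total variation under restriction), which lets $\bigvee_a^b(f)$ come out of the sum, leaving $\sum_{i=0}^{n-1} [u(x_{i+1}) - u(x_i)]$, a telescoping sum equal to $u(b) - u(a)$. Combining these yields $\left| R(f,u,I_n,\xi) \right| \le \left[ \tfrac{1}{2} + \left| \tfrac{1}{2}-\alpha \right| \right] [u(b) - u(a)] \cdot \bigvee_a^b(f)$, which is the claimed bound. The only mildly delicate point is confirming that the per-subinterval quadrature expressions sum exactly to $S(f,u,I_n,\xi)$ as written in (\ref{eq4.1}) — this is a bookkeeping check matching the $\alpha$-weighted endpoint terms with the $(1-\alpha)$-weighted midpoint terms $u\bigl(\tfrac{x_i+x_{i+1}}{2}\bigr)$ — and that the existence of each local $\mathcal{RS}$-integral is guaranteed (which it is, since $f$ is of bounded variation and $u$ is continuous where needed, or more simply since $f$ is of bounded variation and $u$ is monotone, so $\int f\,du$ exists over any subinterval). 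No genuine obstacle arises beyond this routine verification.
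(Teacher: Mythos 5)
Your proposal is correct and follows essentially the same route as the paper: apply inequality (\ref{eq2.8}) on each subinterval $[x_i,x_{i+1}]$ with $x=\xi_i$, sum via the triangle inequality, and then coarsen using the monotonicity of $u$ and the additivity of the total variation. Your final bookkeeping (the identity $\tfrac{u(x_{i+1})-u(x_i)}{2}+\bigl|u(\xi_i)-\tfrac{u(x_i)+u(x_{i+1})}{2}\bigr|=\max\{u(\xi_i)-u(x_i),\,u(x_{i+1})-u(\xi_i)\}\le u(x_{i+1})-u(x_i)$, followed by pulling out $\bigvee_a^b(f)$ and telescoping) is in fact a cleaner execution of the same crude estimate the paper performs.
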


\begin{proof}
Applying Corollary \ref{cor1} on the intervals $[x_i, x_{i+1}]$,
we may state that
\begin{multline*}
\left| {\alpha \left[ {\left( {u\left( {\xi _i } \right) - u\left( {x_i } \right)} \right)f\left( {x_i } \right) + \left( {u\left( {x_{i + 1} } \right) - u\left( {\xi _i } \right)} \right)f\left( {x_{i + 1} } \right)} \right]
+ \left( {1 - \alpha } \right)\left\{ {\left[ {u\left( {\frac{{x_i  + x_{i + 1} }}{2}} \right) - u\left( {x_i } \right)} \right]f\left( {\xi _i } \right) }\right.} \right.
\\
\left. {\left.{\left[ {u\left( {x_{i + 1} } \right) - u\left( {\frac{{x_i  + x_{i + 1} }}{2}} \right)} \right]f\left( {x_i  + x_{i + 1}  - \xi _i } \right)} \right\}
 - \int_{x_i }^{x_{i + 1} } {f\left( t \right)du\left( t \right)}
} \right|
\\
\le \left[ {\frac{1}{2} + \left| {\frac{1}{2} - \alpha } \right|}
\right]\left[ {\frac{{u\left( {x_{i + 1} } \right) - u\left( {x_i
} \right)}}{2} + \left| {u\left( {\xi _i } \right) -
\frac{{u\left( {x_i } \right) + u\left( {x_{i + 1} } \right)}}{2}}
\right|} \right] \cdot \bigvee_{x_i}^{x_{i+1}} \left( f \right),
\end{multline*}
for all $i \in \{{0,1,2, \cdots, n-1}\}$.

Summing the above inequality over $i$ from $0$ to $n - 1$ and
using the generalized triangle inequality, we deduce
\begin{align*}
&\left| {R\left( {f,u,I_n ,\xi } \right)} \right|
\\
&\le \left[ {\frac{1}{2} + \left| {\frac{1}{2} - \alpha } \right|}
\right]\sum\limits_{i = 0}^{n - 1}  \left[ {\frac{{u\left( {x_{i +
1} } \right) - u\left( {x_i } \right)}}{2} + \left| {u\left( {\xi
_i } \right) - \frac{{u\left( {x_i } \right) + u\left( {x_{i + 1}
} \right)}}{2}} \right|} \right] \cdot \bigvee_{x_i}^{x_{i+1}}
\left( f \right)
\\
&\le \left[ {\frac{1}{2} + \left| {\frac{1}{2} - \alpha } \right|}
\right] \left[ {\sum\limits_{i = 0}^{n - 1}\frac{{u\left( {x_{i +
1} } \right) - u\left( {x_i } \right)}}{2} + \sum\limits_{i =
0}^{n - 1}\left| {u\left( {\xi _i } \right) - \frac{{u\left( {x_i
} \right) + u\left( {x_{i + 1} } \right)}}{2}} \right|}
\right]\cdot \sum\limits_{i = 0}^{n - 1} {\bigvee_{x_i}^{x_{i+1}}
\left( f \right)}
\\
&\le \left[ {\frac{1}{2} + \left| {\frac{1}{2} - \alpha } \right|}
\right] \left[ {\frac{{u\left( b \right) - u\left( a \right)}}{2}
+ \mathop {\sup }\limits_{i = 0,1, \ldots ,n - 1}\left| {u\left(
{\xi _i } \right) - \frac{{u\left( {x_i } \right) + u\left( {x_{i
+ 1} } \right)}}{2}} \right|} \right]\cdot \bigvee_{a}^{b} \left(
f \right)
\\
&\le \left[ {\frac{1}{2} + \left| {\frac{1}{2} - \alpha } \right|}
\right] \left[ {u\left( b \right) - u\left( a \right)}
\right]\cdot \bigvee_{a}^{b} \left( f \right).
\end{align*}
Since,
\begin{align*}
\mathop {\sup }\limits_{i = 0,1, \ldots ,n - 1}\left| {u\left(
{\xi _i } \right) - \frac{{u\left( {x_i } \right) + u\left( {x_{i
+ 1} } \right)}}{2}} \right| \le \mathop {\sup }\limits_{i = 0,1,
\ldots ,n - 1}  \frac{{u\left( {x_{i + 1} } \right) - u\left( {x_i
} \right)}}{2}= \frac{{u\left( b \right) - u\left( a \right)}}{2}
\end{align*}
and
\begin{align*}
\sum\limits_{i = 0}^{n - 1} {\bigvee_{x_i}^{x_{i+1}} \left( f
\right)} = \bigvee_{a}^{b} \left( f \right).
\end{align*}
which completes the proof.
\end{proof}

\begin{remark}
One may use the remaining inequalities in Section 2, to obtain
other bounds for $R\left( {f,u,I_n ,\xi } \right)$. We shall omit
the details.
\end{remark}

\centerline{}\centerline{}\centerline{}

\textbf{Acknowledgment:} I would like to express my deepest thanks for Dr. Eder Kikianty for appreciated concerns and efforts to provide important comments about the results of this paper.

\centerline{}

\centerline{}

\end{document}